
\documentclass[12pt,draftcls,onecolumn]{IEEEtran}

\IEEEoverridecommandlockouts                              


\makeatletter
\let\NAT@parse\undefined
\makeatother

\usepackage{cite}
\usepackage{amsthm}
\usepackage{amsmath,amssymb,amsfonts}
\usepackage{algorithmic}
\usepackage{graphicx}
\usepackage{textcomp}

\def \L {\mathcal{L}}
\def \a {\alpha}
\def \b {\beta}
\def \l {\lambda}

\newtheorem{example}{Example}
\newtheorem{proposition}{Proposition}
\newtheorem{theorem}{Theorem}

\newtheorem{lemma}{Lemma}
\newtheorem{corollary}{Corollary}
\newtheorem{remark}{Remark}

\begin{document}
\title{Koopman Bilinearization of Nonlinear Control Systems}
\author{Wei Zhang, \IEEEmembership{IEEE Member} and Jr-Shin Li, \IEEEmembership{Senior Member, IEEE}
\thanks{W. Zhang is with the Department of Electrical and Systems Engineering, Washington University,
		St. Louis, MO 63130, USA
        {\tt\small wei.zhang@wustl.edu}}%
\thanks{J.-S. Li is with the Department of Electrical and Systems Engineering, Washington University,
        St. Louis, MO 63130, USA
        {\tt\small jsli@wustl.edu}}%

}

\maketitle

\begin{abstract}
Koopman operators, since introduced by the French-born American mathematician Bernard Koopman in 1931, have been employed as a powerful tool for research in various scientific domains, such as ergodic theory, probability theory, geometry, and topology, with widespread applications ranging from electrical engineering and machine learning to biomedicine and healthcare. The current use of Koopman operators mainly focuses on the characterization of spectral properties of ergodic dynamical systems. In this paper, we step forward from unforced dynamical systems to control systems and establish a systematic Koopman control framework. Specifically, we rigorously derive a differential equation system governing the dynamics of the Koopman operator associated with a control system, and show that the resulting system is a bilinear system evolving on an infinite-dimensional Lie group, which directly leads to a global bilinearization of control-affine systems. Then, by integrating techniques in geometric control theory with infinite-dimensional differential geometry, this further offers a two-fold benefit to controllability analysis: the characterization of controllability for control-affine systems in terms of de Rham differential operators and the extension of the Lie algebra rank condition to systems defined on infinite-dimensional Lie groups. To demonstrate the applicability, we further adopt the established framework to develop the Koopman feedback linearization technique, which, as one of the advantages, waives the controllability requirement for the systems to be linearized by using the classical feedback linearization technique. In addition, it is worth mentioning that a distinctive feature of this work is the maximum utilization of the intrinsic algebraic and geometric properties of Koopman operators, instead of spectral methods and the ergodicity assumption of systems, which further demonstrate a significant advantage of our work and a substantial difference between the presented and existing research into Koopman operators. 
\end{abstract}

\begin{IEEEkeywords}
Geometric control theory, Koopman operators, nonlinear systems, Lie groups. 
\end{IEEEkeywords}

\section{Introduction}
\label{sec:introduction}

Koopman operators, named after the French-born American mathematician Bernard Osgood Koopman, was originally introduced in 1931 for the purpose of adopting Hilbert space methods, state-of-the-art techniques just coming into prominence at that time, to investigate spectral properties of Hamiltonian systems \cite{Koopman31}. Since then, the potential of Koopman operators has started to be widely recognized and extensively explored in numerous scientific domains. First and foremost, Koopman operators lie in the repertoire of the most powerful tools in ergodic theory, especially for the characterization of ergodicity, recurrence, mixing, and topological entropy of (measure-preserving) dynamical systems \cite{Walters82,Petersen83,Mane87}. These seminal research works further opened up various interdisciplinary applications of Koopman operators, notably, the study of geodesic flows on Riemannian manifolds in geometry and topology \cite{Hopf71,Katok95}, the analysis of statistical properties of Markov processes in probability theory \cite{Karatzas98,Doob91}, and the construction of Diophantine approximation in number theory \cite{Cornfeld82,Einsiedler11}. Recently, a large body of research into Koopman operators have been concentrating on the computational, data-driven, and learning aspects, particularly, for inferring temporal evolutions of dynamical systems from measurement data by using spectral-type methods, e.g., Arnoldi, vector Prony, and dynamic mode decomposition methods \cite{Rowley09,Susuki16,Raak16}. Dominant works in this line of research include spectral decompositions of fluid mechanical systems \cite{Rowley09,Budisic12,Mezic13} in mechanical engineering, stability analysis of power systems in electrical engineering \cite{Susuki11,Susuki16,Raak16}, cognitive classification and seizure detection in biomedicine and healthcare, respectively \cite{Zhang19}, and multi-modal learning and prediction in biostatistics \cite{Qian22}. 

Although Koopman operators have demonstrated their capabilities in the study of (unforced) dynamical systems, their roles in control theory for analyzing and manipulating dynamical systems with external inputs still largely remain as an unexplored avenue. To fill in this gap, in this paper, we rigorously establish a systematic Koopman control framework, particularly for tackling control tasks involving nonlinear systems, which in turn simultaneously expands the repertoire of techniques in geometric control theory and the scope of Koopman operator theory. In particular, we start from the rigorous derivation of operator differential equations governing the dynamics of the Koopman operators, referred to as Koopman systems, associated with nonlinear systems by leveraging the theory of semigroups in functional analysis. Built upon this and inspired by the lifting property of Koopman operators that model the dynamics of nonlinear systems on finite-dimensional manifolds by linear transformations on infinite-dimensional vector spaces, we define the notation of Koopman bilinearization for control-affine systems, which then leads to the characterization of controllability for such systems in terms of de Rham differential operators. Independently, we also show that the resulting bilinear Koopman systems evolve on some infinite-dimensional Lie groups, and then investigate their controllability properties by integrating techniques in geometric control theory and infinite-dimensional differential geometry. In particular, this gives rise to an extension of the Lie algebra rank condition (LARC) to systems defined on infinite-dimensional Lie groups. The established Koopman control framework is then adopted in the development of the Koopman feedback linearization technique, which not only demonstrates the applicability of this Koopman control framework but also carries out an extension of the classical feedback linearization technique, e.g., the controllability requirement for the systems to be feedback linearized is waived. 

It is worth mentioning that, in this work, the leverage of Koopman operators does not require any measure-preserving property or ergodicity of the corresponding dynamical systems, and the development of the Koopman control framework does not involve spectral methods, either. This unique feature not only highlights the major advantage of our approach but also indicates a substantial difference between the presented and existing works on Koopman operators. In addition, owing to the infinite-dimensional nature of Koopman systems and their intimacy with finite-dimensional systems, this work further sheds light on a general framework of analysis and control of infinite-dimensional systems by using finite-dimensional methods. 

The paper is organized as follows. In Section \ref{sec:Koopman}, we define Koopman operators from the perspective of dynamical systems theory, and then launch a detailed investigation into their algebraic and analytic properties, which provide the tools for the derivation of the differential equations governing the dynamics of Koopman systems in a coordinate-free way in Section \ref{sec:Koopman_dynamics}. The focus of Section \ref{sec:Koopman_control}, the main section of the paper, is on the study of Koopman systems associated with control systems. In particular, we introduce the Koopman bilinearization technique for control-affine systems, which leads to the de Rham differential operator characterization of controllability for such systems. In parallel, controllability of bilinear Koopman systems are also systematically studied, leading an extension of the LARC to systems defined on infinite-dimensional Lie groups. At last, in Section \ref{sec:Koopman_feedback_linearization}, the developed framework is adopted to extend the classical feedback linearization technique from the perspective of Koopman systems.

\section{Koopman Operators of Dynamical Systems}
\label{sec:Koopman}
In this section, we will briefly review the Koopman operator theory from the dynamical systems viewpoint. The emphasis will be placed on investigating the analytical properties of Koopman operators as linear operators for preparing the operator-theoretic analysis of control systems in the following sections. In addition, it is worth mentioning that, unlike many existing literatures on Koopman operator theory, our approach do not require any ergodicity or measure-preserving property of dynamical systems.  

\subsection{Flows of dynamical systems}
\label{sec:flow}
Given a dynamical system evolving on a finite-dimensional smooth manifold $M$, defined by the ordinary differential equation
\begin{align}
\label{eq:system}
\frac{d}{dt}x(t)=f\big(x(t)\big),
\end{align}
with $f$ a smooth vector field on $M$, we suppose that $f$ is \emph{complete}, namely, the solution of the differential equation starting from any point in $M$ exists for all $t\in\mathbb{R}$. This property holds, e.g., in the cases that $f$ is compactly supported, $M$ is compact, or $M$ is a Lie group with $f$ a left- or -right invariant vector field \cite{Lee12}. 

Under the completeness assumption, the vector field $f$ generates a smooth left action of the additive group $\mathbb{R}$ on $M$, called the \emph{(global) flow} or \emph{one-paramater group action} of $f$, given by the smooth map $\Phi:\mathbb{R}\times M\rightarrow M$ sending $(t,x)$ to the solution of the system in \eqref{eq:system} at the time $t$ starting from the initial condition $x$. As a group action, the flow $\Phi$ satisfies 
\begin{align}
\label{eq:group_law}
\Phi(t,\Phi(s,x))=\Phi(s+t,x),\quad \Phi(0,x)=x
\end{align}
for all $s,t\in\mathbb{R}$ and $x\in M$, and further gives rises to two collections of maps:
\begin{itemize}
\item $\Phi_t:M\rightarrow M$ given by $x\mapsto\Phi(t,x)$ for each $t\in\mathbb{R}$. The completeness of $f$ and uniqueness of solutions of the ordinary differential equation in \eqref{eq:system} implies that $\Phi_t$ is a diffeomorphism with $\Phi^{-1}_t=\Phi_{-t}$, and we also refer to $\Phi_t$ as the time-$t$ flow of the system. Moreover, the properties of $\Phi$ in \eqref{eq:group_law} are equivalent to the group laws: $\Phi_t\circ\Phi_s=\Phi_{s+t}$ and $\Phi_0={\rm Id}_M$, where ${\rm Id}_M$ is the identity map on $M$. In another word, the family of maps $\{\Phi_t\}_{t\in\mathbb{R}}$ is a subgroup of the diffeomorohism group of $M$.  

\item $\Phi^{x}:\mathbb{R}\rightarrow M$ given by $t\mapsto\Phi(t,x)$ for each $x\in M$. Then, $\Phi^{x}$ is exactly the integral curve of $f$ passing through $x$. Infinitesimally, this simply means that $\Phi^{x}$ satisfies the differential equation in \eqref{eq:system} for any $x\in M$ as
\begin{align*}
\frac{\partial}{\partial t}\Phi(t,x)=f(\Phi(t,x))
\end{align*}
or equivalently, in the coordinate-free representation
\begin{align}
\label{eq:flow}
\Phi_*\Big(\frac{\partial}{\partial t}\Big)=f\circ\Phi_t
\end{align}
where $\Phi_*=d\Phi:T(\mathbb{R}\times M)\rightarrow TM$ is the pushforward (differential) of $\Phi$, and $T(\mathbb{R}\times M)$ and $TM$ are the tangent bundles of $\mathbb{R}\times M$ and $M$, respectively. The fact of $\Phi_t$ being a diffeomorohism guarantees that $\Phi_*$ is globally well-defined and smooth.  
\end{itemize}

\subsection{Koopman operators}
\label{sec:Koopman_properties}
In practice, knowledge about a dynamical system may only be acquired from some observables of the system. Following the notations used in the previous section, the uniqueness of solutions of the system in \eqref{eq:system} indicates that the state variable $x(t)$ completely represents the dynamics of the system, and hence data collected from observables of the system must be in the form of functions of $x(t)$. Mathematically, this implies that observables of the system are just functions defined on the state-space of the system. To be consistent with the smoothness of the state-space manifold $M$ and the vector field $f$ governing the system dynamics, we focus on observables in $C^\infty(M)$, the space of real-valued smooth functions defined on $M$. Naturally, the flow on $M$ generated by the dynamics of the system in \eqref{eq:system} induces a flow on the space of observables $C^\infty(M)$ of the system. This then opens up the possibility of understanding the dynamics of the system by using the flow on the space of its observables, which also motivates the introduction of Koopman operators.

\subsubsection{Koopman operators and Koopman groups}
\label{sec:Koopman_algebra}
Formally, for each $t\in\mathbb{R}$, we define the \emph{Koopman operator} $U_t$ of the system as the composition operator with the time-$t$ flow of the system, that is, $U_th=h\circ\Phi_t$ for any $h\in C^\infty(M)$. Because the function composition operation is linear, $U_t$ is a linear operator on $C^\infty(M)$. Together with the fact that the composition $h\circ\Phi_t$ of the smooth functions $h$ and $\Phi_t$ is also a smooth function in $C^\infty(M)$, $U_t$ maps $C^\infty(M)$ into itself. Moreover, as $t$ varying, the group laws satisfied by the time-$t$ flow $\Phi_t$ are also inherited by the Koopman operator $U_t$, i.e., 
\begin{align}
\label{eq:Koopman_group_law}
U_0=I\quad\text{and}\quad U_{s+t}=U_sU_t
\end{align}
for any $s,t\in\mathbb{R}$, where $I$ denotes the identity map on $C^\infty(M)$. In another word, the Koopman operator $U_t$ gives rise to a flow on $C^\infty(M)$, and the ``vector field", or equivalently the dynamical system, on $C^\infty(M)$ generating this flow is of great interest and importance in this paper, which will be fully investigated from the analytical perspective in Section \ref{sec:Koopman_dynamics}. On the other hand, implied by the invertibility of $\Phi_t$ with the inverse $\Phi_{-t}$, each $U_t$ is also invertible with the inverse given by $U_{-t}$, that is, the map $h\mapsto h\circ\Phi_{-t}$. Therefore, the family of operators $\{U_t\}_{t\in\mathbb{R}}$ indeed defines a one-parameter subgroup of the group of invertible linear operators from $C^\infty(M)$ onto itself under the group operation of operator compositions, and it will be referred to as the \emph{Koopman group} associated with the dynamical system in \eqref{eq:system}.

\subsubsection{Analytical properties of Koopman operators}
\label{sec:Koopman_analysis}
To investigate the analytical properties of the Koopman operator $U_t$, it is inevitable to leverage the topology on the observable space $C^\infty(M)$. To motivate the idea, we recall that in the case of $M$ being an open subspace of the Euclidean space $\mathbb{R}^n$, the topology on $C^\infty(M)$ generated by the family of seminorms 
\begin{align*}
\|h\|_{K,\a}=\sup_{x\in K}\Big|\frac{\partial^\a}{\partial x^\a}h(x)\Big|
\end{align*}
for any multi-index $\a\in\mathbb{N}^n$ and compact subset $K\subset M$, called the $C^\infty$-topology on $C^\infty(M)$, makes $C^\infty(M)$ a locally convex topological vector space \cite{Schaefer99}. For a general finite-dimensional smooth manifold $M$, because $M$ is locally homeomorphic to an Euclidean space, the aforementioned construction of $C^\infty$-topology for $C^\infty(M)$ will be still valid if the differentiation operations in those seminorms $\|\cdot\|_{\a,K}$ are well-defined on $M$. In particular, a suitable replacement for the partial derivatives is a covariant derivative $D:C^\infty(M)\rightarrow C^\infty(T^*M)$ on $M$, where $T^*M$ denotes the cotangent bundle of $M$, and $D$ can always be globally defined in a coordinate-free way, e.g, by using the Levi-Civita connection of a Riemannian metric on $M$. 
Another consequence of $M$ being a manifold is that it is a $\sigma$-compact locally compact Hausdorff space \cite{Lee12}, and hence $M$ has an open cover consisting of an increasing sequence of precompact open sets $\{U_i\}_{i\in\mathbb{N}}$ such that $\overline{U}_i\subset U_{i+1}$ for all $i\in\mathbb{N}$, where $\overline{U}_m$ denotes the closure of $U_m$ (with respect to the topology of $M$) \cite{Munkres00}. As a result, every compact subset of $M$ is contained in at least one of such open set $U_m$. Integrating these observations, the $C^\infty$-topology on $C^\infty(M)$ is generated by the countable family of seminorms
\begin{align}
\label{eq:C^infty_topology}
\|h\|_{i,j}=\sup_{x\in \overline{U}_i}\Big|D^jh(x)\Big|,
\end{align}
where $D^j:C^\infty(M)\rightarrow C^\infty(\otimes^{j}T^*M)$, playing the role of the $j^{\rm th}$ power (composition with itself) of $D$, is the covariant derivative on the sections of the tensor bundle $\otimes^jT^*M$ induced by $D$, and $\otimes^jT^*M$ denotes the $j$ times tensor of $T^*M$. In another word, $C^\infty(M)$ is a Frech\'{e}t space. Note that although different choices of the open cover $\{U_i\}_{i\in\mathbb{N}}$ and covariant derivative $D$ result in different families of seminorms, but they generate the same $C^\infty$-topology on $C^\infty(M)$. 

From the analytical viewpoint, a major use of the topology on a vector space is to evaluate continuity, equivalently boundedness, of linear operators. For example, in the case of the $C^\infty$-topology on $C^\infty(M)$, the covariant derivative $D^j$, which is also an order $j$ partial differential operator on $M$, is bounded linear operator as $\|D^jh\|_{i,k}=\|h\|_{i,k+j}$ for all $i,j,k\in\mathbb{N}$. This lays the foundation for the Koopman linearization method proposed in the following sections, where we use the Lie derivative as the differential operator $D$. The task now is to show that the Koopman operator $U_t$ is also bounded. 

\begin{proposition}
\label{prop:Koopman_bounded}
Given a smooth dynamical system defined on a smooth manifold $M$ as in \eqref{eq:system}, the associated Koopman operator $U_t:C^\infty(M)\rightarrow C^\infty(M)$ is a bounded linear operator under the $C^\infty$-topology on $C^\infty(M)$.  
\end{proposition}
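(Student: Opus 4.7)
The plan is to use the standard characterization of continuity for linear maps between Fr\'echet spaces: a linear operator $T:C^\infty(M)\rightarrow C^\infty(M)$ is bounded with respect to the $C^\infty$-topology precisely when, for every seminorm $\|\cdot\|_{i,j}$ defined by \eqref{eq:C^infty_topology}, there exist a constant $C>0$, a finite collection of indices, and a single chart-level index $m$ such that $\|Th\|_{i,j}\le C\sum_{k=0}^{j}\|h\|_{m,k}$ for all $h\in C^\infty(M)$. I will establish such an estimate directly for $U_th=h\circ\Phi_t$.

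First, I would exploit the fact that $\Phi_t$ is a diffeomorphism of $M$ and $\overline{U}_i$ is compact, so $\Phi_t(\overline{U}_i)$ is a compact subset of $M$. Because the sequence $\{U_m\}_{m\in\mathbb{N}}$ is an increasing open cover of $M$ by precompact sets with $\overline{U}_m\subset U_{m+1}$, there exists an index $m=m(i,t)$ for which $\Phi_t(\overline{U}_i)\subset \overline{U}_m$. This index will be the ``source'' chart into which all values of the argument of $h$ get confined when we restrict attention to the domain $\overline{U}_i$.

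Next, I would apply a coordinate-free chain rule for covariant derivatives (an intrinsic Fa\`a di Bruno formula) to expand $D^j(h\circ\Phi_t)$ at a point $x\in\overline{U}_i$ as a finite linear combination, over partitions up to size $j$, of tensor contractions of the form $(D^k h)(\Phi_t(x))$ paired with polynomial expressions in the covariant derivatives $D\Phi_t,\dots,D^j\Phi_t$ of the flow map at $x$, with $0\le k\le j$. Since $\Phi_t$ is smooth, each of these covariant derivatives is a smooth tensor field, so its pointwise norm is continuous, hence bounded on the compact set $\overline{U}_i$ by some constant $K_{i,j,t}<\infty$. Combining this with the inclusion $\Phi_t(\overline{U}_i)\subset \overline{U}_m$ obtained above yields an estimate of the form
\begin{align*}
\|U_t h\|_{i,j}=\sup_{x\in\overline{U}_i}\bigl|D^j(h\circ\Phi_t)(x)\bigr|\le K_{i,j,t}\sum_{k=0}^{j}\sup_{y\in\overline{U}_m}\bigl|D^k h(y)\bigr|=K_{i,j,t}\sum_{k=0}^{j}\|h\|_{m,k},
\end{align*}
which is exactly the required continuity condition for $U_t$ as a map between the Fr\'echet space $C^\infty(M)$ and itself.

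The main obstacle I anticipate is making the intrinsic Fa\`a di Bruno expansion rigorous in a genuinely coordinate-free manner, since iterated covariant derivatives of a composition involve contractions with the connection's Christoffel terms and are more delicate than in the flat Euclidean setting. A clean way around this is to cover the compact set $\overline{U}_i$ by finitely many coordinate charts on which $D$ reduces to partial derivatives up to bounded terms, apply the classical multivariate Fa\`a di Bruno formula together with the smoothness of $\Phi_t$ on each chart, and then patch the local estimates using a subordinate partition of unity; the global bound $K_{i,j,t}$ is obtained as the maximum of the local bounds, and the independence of the resulting $C^\infty$-topology from the choice of cover and covariant derivative (noted in the paragraph preceding the proposition) ensures the argument is intrinsic.
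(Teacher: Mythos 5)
Your proposal is correct and follows essentially the same route as the paper's proof: expand $D^j(h\circ\Phi_t)$ via the chain rule into terms $D^k h$ evaluated at $\Phi_t(x)$ times polynomial expressions in the derivatives of $\Phi_t$, bound those coefficients on the compact set $\overline{U}_i$, and use that $\Phi_t(\overline{U}_i)$ is compact and hence contained in some $U_m$ to convert the result into the required seminorm estimate. If anything, your discussion of how to make the intrinsic Fa\`a di Bruno expansion rigorous via charts and a partition of unity is more careful than the paper, which simply invokes ``the chain rule for covariant derivatives'' without further justification.
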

\begin{proof}
To show the boundedness, equivalently, continuity of $U_t$ under the topology generated by the family of seminorms in \eqref{eq:C^infty_topology}, it suffices to prove that for any $i,j\in\mathbb{N}$, there exist $i_1,\dots,i_n,j_1,\dots,j_n\in\mathbb{N}$ and $C>0$ such that $\|U_th\|_{i.j}\leq C\sup_{i_k,j_k}\|h\|_{i_k,j_k}$ \cite{Schaefer99}.

The chain rule for covariant derivatives yields
\begin{align*}
\|U_th\|_{i,j}&=\sup_{x\in\overline{U}_i}\big|D^jh(\Phi_t(x))\big|\\
&=\sup_{x\in\overline{U}_i}\big|\sum_{k=0}^jD^kh(\Phi_t(x)) p_k(\Phi_t(x),\dots,D^k\Phi_t(x))\big|
\end{align*}
for some polynomial functions $p_k$ in $k$ variables, $k=0,\dots,j$. Let $C_k=\sup_{x\in\overline{U}_i}p_k(\Phi_t(x),\dots,D^k\Phi_t(x))$, then it leads to 
\begin{align*}
\|U_th\|_{i,j}&\leq\sum_{k=0}^j C_k\sup_{x\in\overline{U}_i}\big|D^kh(\Phi_t(x))\big|\\
&=\sum_{k=0}^j C_k\sup_{y\in\Phi(\overline{U}_i)}\big|D^kh(y)\big|.
\end{align*}
Because $\Phi_t$ is smooth and $\overline{U_i}$ is compact, $\Phi_i(\overline{U_i})$ is also compact and contained in some $U_l$, which leads to the desired result
\begin{align*}
\|U_th\|_{i,j}\leq &\sum_{k=0}^jC_k\sup_{x\in\overline{U}_l}\big|D^kh(x)\big|=\sum_{k=0}^jC_k\sup_{x\in\overline{U}_l}\|h(x)\|_{l,k}\\
& \leq C\sup_{k=0,\dots,j}\|h\|_{l,k}
\end{align*}
by setting $C=j\max C_k$.
\end{proof}

Recall that all bounded linear transformations on $C^\infty(M)$ form a group ${\rm Aut}(C^\infty(M))$, called the \emph{automorphism group} of $C^\infty(M)$. Then, Proposition \ref{prop:Koopman_bounded} can also be equivalently presented in algebraic terminologies as follows.

\begin{corollary}
\label{cor:Koopman_bounded}
The Koopman group $\{U_t\}_{t\in\mathbb{R}}$ is a one parameter subgroup of the automorphism group ${\rm Aut}(C^\infty(M))$ of $C^\infty(M)$.
\end{corollary}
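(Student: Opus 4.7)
The plan is to observe that the corollary is essentially a repackaging of Proposition \ref{prop:Koopman_bounded} together with the group laws already recorded in \eqref{eq:Koopman_group_law}, so nothing substantive remains beyond checking that each $U_t$ lies in ${\rm Aut}(C^\infty(M))$ and that the assignment $t\mapsto U_t$ is a group homomorphism from $(\mathbb{R},+)$.

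First I would recall that membership in ${\rm Aut}(C^\infty(M))$ requires three things: linearity, boundedness, and invertibility with bounded inverse. Linearity of $U_t$ was already noted in Section \ref{sec:Koopman_algebra} from the linearity of function composition in the left argument. Boundedness in the $C^\infty$-topology is precisely the content of Proposition \ref{prop:Koopman_bounded}. For invertibility, I would invoke the group law \eqref{eq:Koopman_group_law} with $s=-t$: this gives $U_t U_{-t}=U_{-t}U_t=U_0=I$, so $U_t$ is a bijection on $C^\infty(M)$ with two-sided inverse $U_{-t}$. Since $\Phi_{-t}$ is itself a smooth complete flow (namely the time-$(-t)$ flow of $f$), Proposition \ref{prop:Koopman_bounded} applies verbatim to $U_{-t}$ and yields boundedness of the inverse. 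Hence $U_t\in{\rm Aut}(C^\infty(M))$ for every $t\in\mathbb{R}$.

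Next I would verify the one-parameter subgroup property: the map $\rho:\mathbb{R}\to{\rm Aut}(C^\infty(M))$, $t\mapsto U_t$, satisfies $\rho(0)=I$ and $\rho(s+t)=\rho(s)\rho(t)$, both of which are exactly \eqref{eq:Koopman_group_law}. Consequently the image $\{U_t\}_{t\in\mathbb{R}}$ is a subgroup of ${\rm Aut}(C^\infty(M))$ parameterized by $\mathbb{R}$, which is what the corollary asserts.

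There is no genuine obstacle here, since all the analytic work has been absorbed into Proposition \ref{prop:Koopman_bounded}; the only point worth being slightly careful about is not confusing the two-sided invertibility (which follows algebraically from the flow group law) with boundedness of the inverse (which requires a separate appeal to the proposition applied to the reverse-time flow $\Phi_{-t}$). Once these are kept distinct, the proof reduces to a one- or two-line citation of the preceding results.
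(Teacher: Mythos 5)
Your proposal is correct and matches the paper's (implicit) argument: the paper presents this corollary as a direct algebraic restatement of Proposition~\ref{prop:Koopman_bounded} combined with the group laws in \eqref{eq:Koopman_group_law} and the invertibility $U_t^{-1}=U_{-t}$ already noted in Section~\ref{sec:Koopman_algebra}, which is exactly what you spell out. Your added care in distinguishing algebraic invertibility from boundedness of the inverse (via Proposition~\ref{prop:Koopman_bounded} applied to the reverse-time flow) is a detail the paper leaves implicit, but it is the same route.
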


This algebraic characterization of Koopman operators in Corollary \ref{cor:Koopman_bounded} opens up the possibility to associate infinitesimal generators, conceptually derivatives with respect to the time variable $t$, to Koopman operators, which then naturally govern the dynamics of ordinary differential equation systems defined on ${\rm Aut}(C^\infty(M))$. The focus of the next section will be on the study of these Koopman operator-induced systems, with the emphasis on the relationship between nonlinear systems on $M$ and their Koopman systems on ${\rm Aut}(C^\infty(M))$.

\section{Koopman Linearization of Nonlinear Dynamical Systems}
\label{sec:Koopman_dynamics}

Koopman operators are defined to be associated with dynamical systems, and this immediately brings to the generic question of how the dynamics of a system reflects on the associated Koopman operator. To gain some insight into this question, taking the system in \eqref{eq:system} as an example, we first observe from \eqref{eq:flow} that the vector field $f$ governing the system dynamics can be recovered from the flow $\Phi$ by taking the time derivative. Together with the fact that the Koopman operator $U_t$ associated with the system is the composition operator with respect to the time-$t$ flow $\Phi_t$, this observation gives a clue about the intimacy between the system dynamics and the time derivative of $U_t$, which will be fully explored, mainly from the control-theoretical perspective, in this section. To be more specific, we will derivative a differential equation system governing the dynamics of the Koopman operator $U_t$, towards the goal of investigating  control-related properties of the system by using the associated Koopman system. 

\subsection{Infinitesimal generators of Koopman groups}
\label{sec:Koopman_system}

As motivated below Corollary \ref{cor:Koopman_bounded}, the differential equation system governing the dynamics of the Koopman operator $U_t$ can be obtained from the infinitesimal generator of the Koopman group $\{U_t\}_{t\in\mathbb{R}}$. To guarantee the existence of the infinitesimal generator of $\{U_t\}_{t\in\mathbb{R}}$, it is required that $\{U_t\}_{t\in\mathbb{R}}$ is strongly continuous, i.e., $U_t\rightarrow U_0$ as $t\rightarrow0$ in the strong operator topology, equivalently, $U_th\rightarrow U_0h=h$ as $t\rightarrow0$ for all $h\in C^\infty(M)$ in the $C^\infty$-topology \cite{Yosida95}.

\begin{lemma}
\label{lem:Koopman_continuous}
For any dynamical system defined on a smooth manifold $M$ as in \eqref{eq:system}, the associated Koopman group $\{U_t\}_{t\in\mathbb{R}}$ is a strongly continuous one parameter group of continuous linear operators on $C^\infty(M)$.
\end{lemma}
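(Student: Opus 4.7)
The plan is to build directly on Proposition \ref{prop:Koopman_bounded} and equation \eqref{eq:Koopman_group_law}, which already establish that each $U_t$ is a bounded (continuous) linear operator on $C^\infty(M)$ and that the family $\{U_t\}_{t\in\mathbb{R}}$ satisfies the one-parameter group laws $U_0=I$ and $U_{s+t}=U_sU_t$. Consequently, the only remaining content of the lemma is the strong continuity assertion: for every $h\in C^\infty(M)$, $U_th\to h$ in the $C^\infty$-topology as $t\to 0$. Since this topology is generated by the countable family of seminorms $\|\cdot\|_{i,j}$ in \eqref{eq:C^infty_topology}, it suffices to show, for each fixed $h\in C^\infty(M)$ and each pair $i,j\in\mathbb{N}$, that $\|U_th-h\|_{i,j}\to 0$ as $t\to 0$.

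My first step would be to dispatch the base case $j=0$, where the claim reduces to the uniform convergence $h\circ\Phi_t\to h$ on the compact set $\overline{U}_i$. This is a direct consequence of the joint continuity of the flow $\Phi:\mathbb{R}\times M\to M$ (guaranteed by its smoothness), the continuity of $h$, and the compactness of $\overline{U}_i$: on any slightly enlarged compact neighborhood $K$ of $\overline{U}_i$, $h$ is uniformly continuous and $\Phi_t\to\mathrm{Id}_M$ uniformly as $t\to 0$, so standard estimates give the conclusion. For $j\ge 1$, I would apply the covariant chain rule used in the proof of Proposition \ref{prop:Koopman_bounded} to expand
\begin{align*}
D^j(U_th)(x)=\sum_{k=0}^{j}D^kh(\Phi_t(x))\,p_k\bigl(\Phi_t(x),\dots,D^k\Phi_t(x)\bigr),
\end{align*}
and then compare this term-by-term with $D^jh(x)$, which is the value of the same expression at $t=0$.

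The analytical engine is joint smoothness of $\Phi$, which implies that on the compact set $\overline{U}_i$, every covariant derivative $D^l\Phi_t$ depends continuously on $t$ in the uniform norm; in particular, as $t\to 0$, $\Phi_t\to\mathrm{Id}_M$ together with all of its covariant derivatives uniformly on $\overline{U}_i$. Composing with the uniformly continuous functions $D^kh$ (which are continuous and bounded on a compact neighborhood of $\overline{U}_i$), one obtains $D^kh\circ\Phi_t\to D^kh$ uniformly on $\overline{U}_i$ for every $k\le j$, and the polynomial coefficients $p_k$ converge uniformly to their values at $(\mathrm{Id}_M,I,0,\dots,0)$. A triangle-inequality argument then collapses the expansion to the limit $D^jh(x)$ uniformly in $x\in\overline{U}_i$, yielding $\|U_th-h\|_{i,j}\to 0$ as required.

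The step I expect to be most delicate is verifying that the chain-rule coefficients $p_k$, which encode contributions from the Christoffel-type symbols of the covariant derivative $D$, really do specialize at $t=0$ so that the sum reduces to $D^jh(x)$. This is a coordinate-free reflection of the identity that the covariant derivative of $h$ pulled back by the identity diffeomorphism equals $D^jh$ itself, and it is most cleanly handled by an induction on $j$ using the intertwining relation $D(U_th)=\Phi_t^{*}(Dh)$ together with the continuity of $t\mapsto\Phi_t^{*}$ on sections of $\otimes^k T^*M$, rather than by expanding $p_k$ explicitly. This induction bypasses the bookkeeping of connection symbols and reduces each higher-derivative case to the already-established base case applied to a smooth tensor-valued analogue of $h$.
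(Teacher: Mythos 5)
Your proposal is correct, but it takes a noticeably different (and in fact more careful) route than the paper. The paper's own proof also reduces strong continuity to the seminorm convergence $\|U_th-h\|_{i,j}\to 0$, but then argues by pushing the limit $t\to 0$ successively inside the supremum, inside $D^j$, and inside $h$, invoking lower semicontinuity of the supremum and uniform continuity of $D^jh$ on $\overline{U}_i$, and finally using only the pointwise continuity $\Phi_t(x)\to x$. That interchange is the delicate step of the paper's argument: the semicontinuity of the supremum by itself bounds $\sup_x\lim_t$ by $\liminf_t\sup_x$, i.e.\ in the direction opposite to what an upper bound on $\lim_t\sup_x$ requires, so what is really needed is exactly the uniformity over $\overline{U}_i$ that you establish explicitly. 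Your route instead recycles the covariant chain-rule expansion from the proof of Proposition \ref{prop:Koopman_bounded}, shows that every factor ($D^kh\circ\Phi_t$ and the coefficients built from $\Phi_t,\dots,D^k\Phi_t$) converges uniformly on $\overline{U}_i$ as $t\to 0$ by joint smoothness of $\Phi$ and compactness, and collapses the sum to $D^jh$ by a triangle inequality; the specialization at $t=0$ is immediate since the same expansion applied to $\Phi_0=\mathrm{Id}_M$ is tautologically $D^jh$, so the induction via $D(U_th)=\Phi_t^*(Dh)$ you sketch is a clean way to avoid tracking the connection coefficients. What your approach buys is a genuinely uniform estimate that closes the gap in the limit--supremum interchange; what the paper's shorter argument buys is brevity, at the cost of leaving the uniformity implicit.
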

\begin{proof}
Let $\Phi:\mathbb{R}\times M\rightarrow M$ and $U_t$ denote the flow and Koopman operator of the system, respectively,  and pick any $h\in C^\infty(M)$. Because the $C^\infty$-topology on $C^\infty(M)$ is generated by the family of seminorms $\|\cdot\|_{i,j}$ on $M$, $U_th\rightarrow h$ is equivalent to $\|U_th-h\|_{i,j}\rightarrow0$ for all $i,j\in\mathbb{N}$. To prove the seminorm convergence, by the definition of $\|\cdot\|_{i,j}$, we have
\begin{align}
\lim_{t\rightarrow0}\|U_th- h\|_{i,j}&=\lim_{t\rightarrow0}\sup_{x\in\overline{U}_i}\big|D^j\big(h(\Phi_t(x))-h(x)\big)\big|\nonumber\\
&\leq\sup_{x\in\overline{U}_i}\big|\lim_{t\rightarrow0}D^j\big(h(\Phi_t(x))-h(x)\big)\big|\nonumber\\
&\leq\sup_{x\in\overline{U}_i}\big|D^j\big(h(\lim_{t\rightarrow0}\Phi_t(x))-h(x)\big)\big|, \label{eq:Koopman_continuous}
\end{align}
where the second and third inequalities follow from the lower semicontinuity of the supremum function and uniform continuity of $D^jh$ on the compact set $\overline{U}_i$ as a section of $\otimes^jT^*M$, respectively. Next, in \eqref{eq:Koopman_continuous}, as a function of $t$, $\Phi_t(x)=\Phi(t,x)$ is also continuous, and hence $\lim_{t\rightarrow}\Phi_t(x)=\Phi(0,x)=x$ holds for all $x\in\overline{U}_j$, giving the desired convergence $\lim_{t\rightarrow0}\|U_th- h\|_{i,j}=0$.
\end{proof}

Using more elementary terminology, Lemma \ref{lem:Koopman_continuous} simply means that the operator-valued function $\mathbb{R}\rightarrow{\rm Aut}(C^\infty(M))$ given by $t\mapsto U_t$ is continuous. Therefore, similar to the basic calculus, it is possible to take the derivative of $U_t$ with respect to $t$, that is, conceptually,
\begin{align}
\frac{d}{dt}\Big|_{t=t_0}U_t&=\lim_{t\rightarrow t_0}\frac{U_t-U_{t_0}}{t-t_0}=\lim_{t\rightarrow t_0}\frac{U_{t_0}(U_{t-t_0}-I)}{t-t_0}\nonumber\\
&=U_{t_0}\lim_{s\rightarrow 0}\frac{U_{s}-I}{s}=U_{t_0}\frac{d}{dt}\Big|_{t=0}U_t, \label{eq:Koopman_group}
\end{align}
where the second equality follows from the group law for the Koopman operator. Moreover, this also implies that the derivative of $U_t$ at any time $t$ is completely determined by its derivative at time 0, which is known as the \emph{infinitesimal generator} of the Koopman group $\{U_t\}_{t\in\mathbb{R}}$. Technically, because the continuity of $\{U_t\}_{t\in\mathbb{R}}$ holds in the strong operator topology, it is required that the derivative is valid in the same topology, meaning, the limit 
\begin{align}
\label{eq:Koopman_generator}
\frac{d}{dt}\Big|_{t=0}U_th=\lim_{t\rightarrow 0}\frac{U_{t}h-h}{t}
\end{align}
converges for all $h\in C^\infty(M)$ in the $C^\infty$-topology.

\begin{theorem}
\label{thm:Koopman_generator}
The infinitesimal generator of the Koopman group $\{U_t\}_{t\in\mathbb{R}}$ associated with the dynamical system $\frac{d}{dt}x(t)=f(x(t))$ evolving on $M$ is the Lie derivative $\mathcal{L}_f$ with respect to the vector field $f\in C^\infty(TM)$.
\end{theorem}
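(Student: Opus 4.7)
My plan is to rewrite the difference quotient $(U_th - h)/t$ whose limit defines the infinitesimal generator in \eqref{eq:Koopman_generator} as the time-average of an operator-valued curve $s\mapsto U_s\mathcal{L}_fh$ that is strongly continuous in $s$, and then invoke Lemma~\ref{lem:Koopman_continuous} to identify the limit as $\mathcal{L}_fh$. The first step is to combine the chain rule with \eqref{eq:flow} to obtain, for any fixed $h\in C^\infty(M)$ and $x\in M$, the pointwise identity
\[
\frac{d}{ds}h(\Phi_s(x)) = dh_{\Phi_s(x)}\bigl(f(\Phi_s(x))\bigr) = (\mathcal{L}_fh)(\Phi_s(x)) = (U_s\mathcal{L}_fh)(x),
\]
using the standard characterization $\mathcal{L}_fh = dh(f)$ of the Lie derivative of a smooth function along a vector field. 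Integrating from $0$ to $t$ and recognizing the left-hand side as $(U_th - h)(x)$ yields
\[
U_th - h = \int_0^t U_s\mathcal{L}_fh\,ds,
\]
where the right-hand side is understood as a Riemann integral of the continuous curve $s\mapsto U_s\mathcal{L}_fh$ valued in the Fr\'echet space $C^\infty(M)$; note that $\mathcal{L}_fh\in C^\infty(M)$ since $f$ and $h$ are smooth, so the integrand belongs to the correct space.

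The second step is to divide by $t$ and exploit a mean-value-type inequality for Fr\'echet-valued integrals: for each seminorm $\|\cdot\|_{i,j}$ from \eqref{eq:C^infty_topology},
\[
\Bigl\|\frac{U_th - h}{t} - \mathcal{L}_fh\Bigr\|_{i,j} = \Bigl\|\frac{1}{t}\int_0^t\bigl(U_s\mathcal{L}_fh - \mathcal{L}_fh\bigr)ds\Bigr\|_{i,j} \leq \sup_{|s|\leq|t|}\|U_s\mathcal{L}_fh - \mathcal{L}_fh\|_{i,j}.
\]
Applying Lemma~\ref{lem:Koopman_continuous} to the function $\mathcal{L}_fh\in C^\infty(M)$ shows that the right-hand side vanishes as $t\to 0$ for every $(i,j)$, which gives convergence of $(U_th - h)/t$ to $\mathcal{L}_fh$ in the $C^\infty$-topology. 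Since the limit holds for every $h\in C^\infty(M)$, the infinitesimal generator of the Koopman group, viewed as a strongly continuous one-parameter group in the sense of Lemma~\ref{lem:Koopman_continuous}, must coincide with $\mathcal{L}_f$.

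The principal technical obstacle is the rigorous handling of the Fr\'echet-valued integral and the exchange of the seminorm with the integration symbol. This relies on the standard theory of Riemann integration in sequentially complete locally convex spaces, which applies in the present setting because continuity of the integrand $s\mapsto U_s\mathcal{L}_fh$ is precisely guaranteed by Lemma~\ref{lem:Koopman_continuous}, and $C^\infty(M)$ endowed with the seminorms in \eqref{eq:C^infty_topology} is a Fr\'echet space. A secondary but important point is that although the seminorms are built from a non-canonical choice of covariant derivative $D$, the scalar Lie derivative $\mathcal{L}_fh = dh(f)$ is intrinsic to the pair $(f,h)$ and independent of any connection, so the identified generator is genuinely coordinate-free, as required for a statement on an abstract smooth manifold $M$.
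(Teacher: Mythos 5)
Your proof is correct, but it follows a genuinely different route from the paper's. The paper's argument is purely differential-geometric: it identifies $U_th=h\circ\Phi_t$ with the pullback $\Phi_t^*h$ of the $0$-tensor $h$ and then invokes the textbook definition of the Lie derivative, $\mathcal{L}_fh=\lim_{t\to 0}(\Phi_t^*h-h)/t$, so the identification of the generator is essentially definitional; the convergence of this limit in the $C^\infty$-topology is then asserted in one sentence from the fact that $\mathcal{L}_fh\in C^\infty(M)$. You instead take the semigroup-theoretic route: integrate the pointwise identity $\frac{d}{ds}h(\Phi_s(x))=(U_s\mathcal{L}_fh)(x)$ to get the Duhamel-type representation $U_th-h=\int_0^t U_s\mathcal{L}_fh\,ds$ as a Riemann integral in the Fr\'echet space $C^\infty(M)$, and then deduce $\bigl\|(U_th-h)/t-\mathcal{L}_fh\bigr\|_{i,j}\le\sup_{|s|\le|t|}\|U_s\mathcal{L}_fh-\mathcal{L}_fh\|_{i,j}\to 0$ from the strong continuity established in Lemma~\ref{lem:Koopman_continuous}. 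The trade-off is clear: the paper's proof is shorter and coordinate-free by construction, but the step asserting that the pullback limit holds in the $C^\infty$-topology (uniform convergence of all covariant derivatives on compacta) is exactly the point it leaves unjustified, whereas your integrated representation reduces that convergence to the already-proved strong continuity of the group applied to the function $\mathcal{L}_fh$, at the cost of importing standard facts about Riemann integration of continuous curves in sequentially complete locally convex spaces (existence of the integral, commutation with the continuous point-evaluation functionals, and the seminorm estimate). In that sense your argument actually supplies the analytic detail the paper's proof glosses over, while the paper's argument makes the geometric content — that the generator is literally the Lie derivative by definition of the latter — more transparent.
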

\begin{proof}
Pick any $h\in C^\infty(M)$, by considering $h$ as a 0-tensor field on $M$, $U_th=h\circ\Phi_t=\Phi^*_th$ is just the pull back of $h$ by the smooth function $\Phi_t$ so that
\begin{align}
\label{eq:Koopman_Lie}
\mathcal{L}_fh=\lim_{t\rightarrow0}\frac{\Phi_t^*h-h}{t}=\lim_{t\rightarrow0}\frac{U_th-h}{t}=\frac{d}{dt}\Big|_{t=0}U_th
\end{align}
following from the definition of Lie derivatives. The fact $\mathcal{L}_fh\in C^\infty(M)$ then implies that the the limit in \eqref{eq:Koopman_Lie} holds in the $C^\infty$-topology on $C^\infty(M)$, concluding the proof. 
\end{proof}

In general, in a topological vector space that is not a Banach space, e.g. the Frech\'{e}t space $C^\infty(M)$ concerned in this paper, equicontinuity of a semigroup of operators in the time variable $t$ is usually a requirement, in addition to strong continuity, to guarantee the infinitesimal generator to be densely defined on its domain \cite{Yosida95}. Unfortunately, Koopman groups generally do not satisfy the equicontintuity condition, for if so, then given any seminorm $\|\cdot\|_{i,j}$, there is a seminorm $\|\cdot\|_{k,l}$ such that $\|U_th\|_{i,j}\leq\|h\|_{k,l}$ for all $t\in\mathbb{R}$ and $h\in C^\infty(M)$. For example, consider the system $\frac{d}{dt}x(t)=1$ defined on $\mathbb{R}$, whose flow is given by $\Phi(t,x)=x+t$ so that $U_th(x)=h(x+t)$ for all $h\in C^\infty(\mathbb{R})$. However, in the case of $h(x)=e^x$, $U_th(x)=e^{x+t}$ escapes every compact subset of $\mathbb{R}$ when $t\rightarrow\infty$, and hence $U_t$ fails to be equicontinuous. Even in such a case, Theorem \ref{thm:Koopman_generator} implies that the infinitesimal generator of the Koopman group is still defined on the whole space $C^\infty(M)$, not only a dense subset of $C^\infty(M)$, since the Lie derivative is always well-defined for all smooth function on $M$, which definitely exhibits one of the most outstanding features of Koopman operators. 

\subsection{Coordinate-free Koopman linearization}
\label{sec:Koopman_dynamics_coordinates}

A recapitulation of the derivation in \eqref{eq:Koopman_group} that the derivative of the Koopman operator $U_t$ at any time is completely determined by the infinitesimal generator $\mathcal{L}_f$ in a linear way sheds light on the construction of a linear differential equation system on ${\rm Aut}(C^\infty(M))$ to describe the dynamics of the Koopman operator. 

\begin{theorem}
\label{thm:Koopman_system}
Given a dynamical system defined on a smooth manifold $M$ as in \eqref{eq:system}, that is, 
$$\frac{d}{dt}x(t)=f(x(t)),$$
 the Koopman operator $U_t$ associated with the system satisfies a linear differential equation system on ${\rm Aut}(C^\infty(M))$, given by,
\begin{align}
\label{eq:Koopman_system}
\frac{d}{dt}U_t=U_t\mathcal{L}_f=\mathcal{L}_fU_t.
\end{align}
\end{theorem}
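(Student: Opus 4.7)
The plan is to obtain both identities in \eqref{eq:Koopman_system} from the one-parameter group law for $U_t$ combined with the identification of the infinitesimal generator in Theorem \ref{thm:Koopman_generator}. The key preparatory observation is that, since $(\mathbb{R},+)$ is abelian, the group law \eqref{eq:Koopman_group_law} yields both $U_{t_0+s}=U_{t_0}U_s$ and $U_{t_0+s}=U_sU_{t_0}$ for all $t_0,s\in\mathbb{R}$. Consequently, for any fixed $t_0\in\mathbb{R}$ and any observable $h\in C^\infty(M)$, the difference quotient can be written in two different ways:
\[
\frac{U_{t_0+s}h-U_{t_0}h}{s}\;=\;U_{t_0}\!\left(\frac{U_sh-h}{s}\right)\;=\;\frac{U_s(U_{t_0}h)-U_{t_0}h}{s}.
\]

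First I would pass to the limit $s\to 0$ in the middle expression: Theorem \ref{thm:Koopman_generator} gives $(U_sh-h)/s\to \mathcal{L}_fh$ in the $C^\infty$-topology, and Proposition \ref{prop:Koopman_bounded} ensures that $U_{t_0}$ is a continuous linear operator in that topology, so the limit may be pushed through $U_{t_0}$, yielding $U_{t_0}\mathcal{L}_fh$. Next I would pass to the limit in the rightmost expression: since $U_{t_0}h$ is itself a smooth function on $M$, Theorem \ref{thm:Koopman_generator} applies with $U_{t_0}h$ in place of $h$ and produces $\mathcal{L}_f(U_{t_0}h)$. Equating the two limits, and noting that $h\in C^\infty(M)$ was arbitrary, establishes $\frac{d}{dt}\big|_{t=t_0}U_t=U_{t_0}\mathcal{L}_f=\mathcal{L}_fU_{t_0}$ in the strong operator topology, which is precisely the linear ODE on $\mathrm{Aut}(C^\infty(M))$ claimed in the theorem.

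The main obstacle is not any computation but the careful bookkeeping of topologies. Because $C^\infty(M)$ is only a Fr\'echet space and the Koopman group $\{U_t\}_{t\in\mathbb{R}}$ is generally not equicontinuous (as noted in the discussion following Theorem \ref{thm:Koopman_generator}), one cannot appeal to standard Banach-space semigroup calculus. The derivative must therefore be interpreted pointwise in $h$ in the strong operator topology, and the crucial interchange of the limit with $U_{t_0}$ is legitimized seminorm-by-seminorm using Proposition \ref{prop:Koopman_bounded}. Once these continuity issues are handled, the two-sided group-law identity makes both equalities $U_{t_0}\mathcal{L}_f=\mathcal{L}_fU_{t_0}$ emerge simultaneously, with no further work needed to verify the commutation of $U_{t_0}$ with its own generator.
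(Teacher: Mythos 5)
Your proof is correct, but it takes a genuinely different route from the paper's. The paper proves \eqref{eq:Koopman_system} by a direct differential-geometric computation: the identity $\frac{d}{dt}U_th=U_t\mathcal{L}_fh$ comes from the chain rule applied to $h\circ\Phi_t$ together with the defining equation \eqref{eq:flow} of the flow, and the identity $\mathcal{L}_fU_th=U_t\mathcal{L}_fh$ comes from the invariance of $f$ under its own flow, $d\Phi_t(f)=f\circ\Phi_t$, which the paper derives from $[f,f]=0$. You instead run the abstract semigroup argument: the abelian group law gives $U_{t_0+s}=U_{t_0}U_s=U_sU_{t_0}$, the two resulting difference quotients converge to $U_{t_0}\mathcal{L}_fh$ (pushing the limit through the continuous operator $U_{t_0}$, justified by Proposition \ref{prop:Koopman_bounded}) and to $\mathcal{L}_f(U_{t_0}h)$ (applying Theorem \ref{thm:Koopman_generator} to the observable $U_{t_0}h$), and equating them gives both equalities at once. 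This is precisely the route the paper itself acknowledges in the remark following its proof (``the commutativity \dots also follows abstractly from the general facts in functional analysis \dots''), so your argument is sanctioned but not the one written out. What each buys: your version is shorter, needs no further geometry once Theorem \ref{thm:Koopman_generator} and Proposition \ref{prop:Koopman_bounded} are in hand, and generalizes to any strongly continuous one-parameter group of continuous operators with everywhere-defined generator; the paper's computation exposes the geometric mechanism (invariance of a vector field under its own flow) that is reused later in Remark \ref{rmk:commutative} to explain why the analogous commutativity fails for the individual vector fields $f$ and $g_i$ in the control-affine case. Your handling of the topological subtleties is sound: convergence is taken seminorm-by-seminorm in the Fr\'echet space $C^\infty(M)$, and continuity of $U_{t_0}$ (not equicontinuity of the family) is all that is needed to interchange the limit with $U_{t_0}$.
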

\vspace{0.2cm}
\begin{proof}
Pick any $h\in C^\infty(M)$, the differential equation in \eqref{eq:Koopman_system} follows from computing the derivative of $U_th$ with respect to $t$ in two ways. At first, the chain rule yields
\begin{align*}
\frac{d}{dt}U_th&=\frac{d}{dt}(h\circ\Phi_t)=d(h\circ \Phi_t)\Big(\frac{d}{dt}\Big)=dh\circ d\Phi\Big(\frac{\partial}{\partial t}\Big)\\
&=dh(f)\circ\Phi_t=\mathcal{L}_fh\circ\Phi_t=U_t\mathcal{L}_fh,
\end{align*}
where we use the definition of the flow in \eqref{eq:flow} and the fact of the Lie derivative $\mathcal{L}_f$ acting on the 0-tensor, i.e., smooth function, $h$ as the directional derivative $\mathcal{L}_fh=dh(f)$ \cite{Lee12}. 

Alternatively, by using the antisymmetry of Lie derivative actions on vector fields as $\mathcal{L}_ff=[f,f]=0$, where $[\cdot,\cdot]$ denotes the Lie bracket of vector fields, we have $d\Phi_t(f)=f\circ\Phi_t$, and then it follows
\begin{align*}
&\mathcal{L}_fU_th=d(U_th)(f)=d(h\circ\Phi_t)(f)\\
&=dh\circ d\Phi_t(f)=dh(f)\circ\Phi_t=U_t\mathcal{L}_fh
\end{align*}
as desired.
\end{proof}

As indicated by the proof of Theorem \ref{thm:Koopman_system}, the system in \eqref{eq:Koopman_system} governing the dynamics of the Koopman operator holds in the strong operator topology induced by the $C^\infty$-topology on $C^\infty$ and we refer to this system as the \emph{Koopman system} associated with the system in \eqref{eq:system}. Although proved computationally, the commutativity of $U_t$ and $\mathcal{L}_f$ in the Koopman system in \eqref{eq:Koopman_system} also follows abstractly from the general facts in functional analysis that the infinitesimal generator of a strongly continuous semigroup of everywhere defined operators is always densely defined and commutes with every operator in the semigroup \cite{Yosida95}.  

\begin{remark}[Coordinate-free global Koopman linearization]
\label{rmk:Koopman_linearization}
Notice that because the Lie derivative $\mathcal{L}_f$ is a linear operator, to be more specific, a first-order partial differential operator, the Koopman system in \eqref{eq:Koopman_system} is always a linear system regardless of the nonlinearity of the associated system in \eqref{eq:system}. Together with that the representation of the Koopman system does not involve the coordinates of $M$, the linearization procedure from the nonlinear system to the associated Koopman system must hold globally. As a summary, Koopman systems give rise to coordinate-free global linearization of nonlinear system. 
\end{remark}

To gain some intuition into the mechanism hidden behind the Koopman system, we recall from linear systems theory that, for any time-invariant linear system defined on $\mathbb{R}^n$, say $\frac{d}{dt}x(t)=Ax(t)$ with $x(t)\in\mathbb{R}^n$ and $A\in\mathbb{R}^{n\times n}$, the transition matrix $e^{tA}$ of the system satisfies \cite{Brockett71}:
\begin{itemize}
\item The group laws: $e^{tA}|_{t=0}=I$, the $n$-by-$n$ identity matrix, and $e^{(s+t)A}=e^{sA}e^{tA}$ for any $s,t\in\mathbb{R}$.
\item The matrix differential equation 
\begin{align}
\label{eq:transition_matrix}
\frac{d}{dt}e^{tA}=Ae^{tA}=e^{tA}A.
\end{align}
\end{itemize}
In addition, as a fundamental solution, any solution of the system can be generated by the transition matrix as $x(t)=e^{tA}x_0$ for some $x_0\in\mathbb{R}^n$, equivalently, 
\begin{align}
\label{eq:transition_matrix_system}
\frac{d}{dt}e^{tA}x_0=Ae^{tA}x_0,
\end{align}
and $e^{tA}$ admits the power series expansion for the exponential function as
$$e^{tA}=\sum_{k=0}^\infty\frac{t^k}{k!}A^k$$ 
with the convergence radius $\infty$, i.e., the series converges for any $A\in\mathbb{R}^{n\times n}$  \cite{Brockett71}.

A comparison between the properties satisfied by the Koopman operator $U_t$ and the transition matrix $e^{tA}$ reviewed above, particularly the operator differential equation in \eqref{eq:Koopman_system} and the matrix differential equation in \eqref{eq:transition_matrix}, immediately gives an interpretation of $U_t$ from the perspective of linear systems theory: the Koopman operator $U_t$ is the fundamental solution, playing the role of the ``transition matrix" as for a finite-dimensional time-invariant linear system, of the infinite-dimensional linear system on $C^\infty(M)$
\begin{align}
\label{eq:Koopman_observable}
\frac{d}{dt}(U_th)=\mathcal{L}_f(U_th),
\end{align}
which can be considered as the infinite-dimensional analogue of the system in \eqref{eq:transition_matrix_system}. Then, similarly, the power series expansion
$$U_t=\exp(t\mathcal{L}_f)=\sum_{k=0}^\infty\frac{t^k}{k!}\mathcal{L}_f^k$$ 
also holds and converges in the strong operator topology with the convergence radius $\infty$ so that any solution of the system in \eqref{eq:Koopman_observable} admits the representation $U_th=\exp(t\mathcal{L}_f)h$ for some $h\in C^\infty(M)$, which in turn gives rise to a functional calculus definition of the Koopman operator $U_t$ as the exponential of the Lie derivative operator $\mathcal{L}_f$. 

On the other hand, by using the coordinates on $M$, the measurement data of the system generated by the observable $h\in C^\infty(M)$ is given by $y(t)=h(x(t))$. The data can be further represented in terms of the Koopman operator $U_t$ as $y(t)=h(\Phi_t(x_0))=U_th(x(0))$, and hence the equation in \eqref{eq:Koopman_observable} exactly recovers the dynamics of the data in the form of the differential equation
\begin{align}
\label{eq:data_dynamics}
\frac{d}{dt}y(t)=\mathcal{L}_fh(x(t))=dh(f)(x(t)),
\end{align}
where we use the fact that the Lie derivative $\mathcal{L}_f$ acts on the 0-tensor $h\in C^\infty(M)$ as the directional derivative along the direction $f$. In the special case of $M=\mathbb{R}^n$, the vector field $f$ can be identified with an $n$-tuple of smooth functions on $\mathbb{R}^n$, and so is $dh$, regarded as the gradient $\nabla h$ of $h$. Then, the equation in \eqref{eq:data_dynamics} reduces to the form of the chain rule
$$\frac{d}{dt}y(t)=\nabla h\cdot f(x(t)),$$
with ``$\cdot$" denoting the Euclidean inner product on $\mathbb{R}^n$, which coincides with the system governing the dynamics of the output of the system in \eqref{eq:system} derived by using the techniques in the classical nonlinear systems theory \cite{Khalil01}. In another word, the system in \eqref{eq:Koopman_observable}, equivalently, the action of the Koopman system in \eqref{eq:Koopman_system} on the space of observables $C^\infty(M)$, basically gives rise to a dynamical system theoretic coordinate-free representation of the chain rule for differentiation.

\section{Geometric Control of Koopman Systems on Lie Groups}
\label{sec:Koopman_control}

After the detailed investigation on Koopman systems in Section \ref{sec:Koopman_dynamics}, we will move one step further in this section to analyze the dynamic behavior of Koopman systems driven by control inputs. In particular, the focus will be placed on the relationship between control-affine systems and their associated Koopman control systems in terms of control-related properties, and this will lead to an operator-theoretic controllability condition for control-affine systems. However, the techniques to be developed are not constrained to examine controllability. To demonstrate the generalizability, we will establish a systematic Koopman feedback linearization framework parallel to the classical feedback linearization method by leveraging the linearity of Koopman systems. 

\subsection{Koopman operators associated with control systems}
\label{sec:Koopman_control}

A control system evolving on a smooth manifold $M$ has the form 
\begin{align}
\label{eq:control_system}
\frac{d}{dt}x(t)=F(t,x(t),u(t)),
\end{align}
where $u$ is the control input, generally assumed to take values in $\mathbb{R}^m$.  In particular, for any given the control input $u$, $F$ is a time-dependent smooth vector field on $M$. As discussed in Section \ref{sec:Koopman}, the Koopman operator associated with a dynamical system needs to be defined through the flow of the system. However, for a system whose dynamics is governed by a time-dependent vector as the system in \eqref{eq:control_system}, it might not generate a well-defined flow, even locally, as defined in Section \ref{sec:flow}, because different integral curves starting at the same point but at different times might follow different paths. To avoid this technical difficulty, we always set the starting time of the system to be 0 and treat $u$ as intermediate variables. Moreover, for any choice of the control input $u$, we also assume the vector field $F$ to be complete so that the system generates a global flow. 

Under the aforementioned technical setups, we can define the Koopman operator associated with the system in \eqref{eq:control_affine} in a similar way as before. To this end, let $\Phi:\mathbb{R}\times M\rightarrow M$ denote the flow of the system driven by some control inputs $u_i$, $i=1,\dots,m$, then $\Phi(t,x)\in M$ is the state of the system at time $t$ starting from $x\in M$ at time 0. The Koopman operator associated with the system is defined similarly as $U_t:C^\infty(M)\rightarrow C^\infty(M)$ given by $h\mapsto h\circ\Phi_t$ for any observable $h\in C^\infty(M)$ of the system. Note that, in this case, although not explicitly expressed in the formulas, the Koopman operator $U_t$, as well as the flow $\Phi_t$, contains the control inputs $u_i$, $i=1,\dots,m$ applied to the system as the intermediate variables. Moreover, for any choice of the control inputs, all of the properties of Koopman operators discussed in Section \ref{sec:Koopman} still hold, which are summarized in the following proposition.

\begin{proposition}
\label{prop:Koopman_control_group}
Given a control system evolving on a smooth manifold $M$ as in \eqref{eq:control_system}, the associated Koopman operator $U_t:C^\infty(M)\rightarrow C^\infty(M)$ satisfies the following:
\begin{enumerate}
\item The Koopman operator $U_t$ is continuous in the $C^\infty$-topology on $C^\infty(M)$.
\item The Koopman group $\{U_t\}_{t\in\mathbb{R}}$ is a strongly continuous one parameter subgroup of ${\rm Aut}(C^\infty(M))$.
\item The Koopman operator $U_t$ satisfies the differential equation system on ${\rm Aut}(C^\infty(M))$, given by,
\begin{align}
\label{eq:Koopman_system_control}
\frac{d}{dt}U_t=\mathcal{L}_FU_t=U_t\mathcal{L}_F
\end{align}
with the initial condition $U_0=I$, the identity of the group ${\rm Aut}(C^\infty(M))$.
\end{enumerate}
\end{proposition}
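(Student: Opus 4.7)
The plan is to prove each of the three claims as a direct extension of the corresponding autonomous results established in Proposition \ref{prop:Koopman_bounded}, Lemma \ref{lem:Koopman_continuous}, and Theorem \ref{thm:Koopman_system}. The standing assumptions---that the control input $u$ has been fixed and the resulting vector field $F$ is complete---guarantee that the flow $\Phi:\mathbb{R}\times M\rightarrow M$ with $\Phi(0,x)=x$ and $\frac{\partial}{\partial t}\Phi(t,x)=F(t,\Phi(t,x),u(t))$ is globally defined, jointly smooth in $(t,x)$, and such that $\Phi_t$ is a diffeomorphism of $M$ for every $t\in\mathbb{R}$. These are precisely the structural features of the flow that the autonomous proofs exploited, so the arguments transfer with only cosmetic modifications.

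For claim (1), I would reproduce the chain-rule seminorm bound from the proof of Proposition \ref{prop:Koopman_bounded}: expanding $D^j(h\circ\Phi_t)$ as a polynomial combination of $D^k h\circ\Phi_t$ and covariant derivatives of $\Phi_t$, bounding the polynomial factors uniformly on the compact set $\overline{U}_i$ using smoothness of $\Phi_t$, and absorbing the remaining supremum into finitely many seminorms $\|h\|_{l,k}$ via compactness of $\Phi_t(\overline{U}_i)\subset U_l$. For claim (2), the one-parameter subgroup structure is inherited from the semigroup property of the flow together with $U_0=I$, while strong continuity reduces, exactly as in Lemma \ref{lem:Koopman_continuous}, to establishing $\|U_t h - h\|_{i,j}\rightarrow 0$ as $t\rightarrow 0$; exchanging the limit past the supremum (by lower semicontinuity) and past $D^j$ (by uniform continuity of $D^j h$ on $\overline{U}_i$) leaves only the continuity of $t\mapsto\Phi_t(x)$ at $t=0$, which is immediate.

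For claim (3), I would compute $\frac{d}{dt}(U_t h)$ in the two ways used in the proof of Theorem \ref{thm:Koopman_system}. The chain rule, together with the defining relation $\Phi_*(\partial/\partial t)=F\circ\Phi_t$, yields $\frac{d}{dt}(h\circ\Phi_t)=dh\circ d\Phi_t(\partial/\partial t)=dh(F)\circ\Phi_t=(\mathcal{L}_F h)\circ\Phi_t=U_t\mathcal{L}_F h$, which gives the first equality. The second equality $\mathcal{L}_F U_t=U_t\mathcal{L}_F$ follows from $\mathcal{L}_F F=[F,F]=0$, which implies $d\Phi_t(F)=F\circ\Phi_t$ and hence $\mathcal{L}_F(U_t h)=d(h\circ\Phi_t)(F)=dh(F)\circ\Phi_t=U_t\mathcal{L}_F h$. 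The main obstacle I anticipate is the bookkeeping created by the time dependence of $F$ through $u(t)$: the identity $d\Phi_t(F)=F\circ\Phi_t$ must be interpreted at each fixed instant $t$ with $F=F(t,\cdot,u(t))$, and the naive semigroup law $U_{s+t}=U_s U_t$ used in the subgroup statement really only holds if the control history after time $s$ is taken as the time-shifted continuation of the original history. Once the time variable is pinned down at each step, however, the instantaneous bracket identity $[F(t,\cdot,u(t)),F(t,\cdot,u(t))]=0$ still suffices to push the commutation argument through pointwise in $t$, completing the proof.
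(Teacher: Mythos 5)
Your proposal is correct and follows essentially the same route as the paper: the paper's proof likewise observes that the flow inherits the group laws from uniqueness of solutions and then simply invokes the proofs of Proposition~\ref{prop:Koopman_bounded}, Lemma~\ref{lem:Koopman_continuous}, and Theorem~\ref{thm:Koopman_system} verbatim for the three claims. If anything, you are more careful than the paper in flagging that the semigroup law $U_{s+t}=U_sU_t$ and the invariance identity $d\Phi_t(F)=F\circ\Phi_t$ require the time variable in $F(t,\cdot,u(t))$ to be handled consistently, a subtlety the paper only addresses implicitly by fixing the starting time to $0$ and treating $u$ as an intermediate variable.
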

\begin{proof}
Driven by any control inputs $u_i$, $i=1,\dots,m$, $\Phi(\cdot,x):\mathbb{R}\rightarrow M$ gives the solution of the system in \eqref{eq:control_affine} passing through the point $x\in M$ at time 0, and hence satisfies the groups laws as in \eqref{eq:group_law} following from the uniqueness of solutions for ordinary differential equations. As a result, $\{U_t\}_{t\in\mathbb{R}}$ is a one parameter group of linear operators from $C^\infty(M)$ to itself. The continuity of $U_t$, strong continuity of the group $\{U_t\}_{t\in\mathbb{R}}$, and the derivation of the system in \eqref{eq:Koopman_system_control} follow from the same proofs as Proposition \ref{prop:Koopman_bounded}, Lemma \ref{lem:Koopman_continuous}, and Theorem \ref{thm:Koopman_system}, respectively. 
\end{proof}

Although the form of the Koopman system in \eqref{eq:Koopman_system_control} associated with a control system appears to be identical to the one in \eqref{eq:Koopman_system} associated with a dynamical system without control input, these two Koopman systems are distinct from several aspects. For example, in the control system case as in \eqref{eq:Koopman_system_control}, we particularly specify the initial condition $U_0=I$ to emphasize that the starting time can only be chosen to be 0, which is necessary to guarantee the Koopman system in \eqref{eq:Koopman_system_control} to be well-defined as discussed in the beginning of Section \ref{sec:Koopman_control}. More importantly, the Koopman system in \eqref{eq:Koopman_system_control} actually inherits the control inputs from the vector field $F$ in the Lie derivative $\mathcal{L}_F$, and hence is a control system defined on ${\rm Aut}(C^\infty(M))$ sharing the same control inputs with the associated control system in \eqref{eq:control_system} defined on $M$. Consequently, it is natural to ask for controllability of the Koopman system in \eqref{eq:Koopman_system_control}, and particularly its relationship with the associated systems in \eqref{eq:control_system} in terms of control-related properties, which will be fully explored by using control-affine systems in the next section.

\subsection{Gelfand duality between bilinear Koopman and control-affine systems}
Nonlinear systems in the control-afffine form are natural models for describing the dynamics of control systems arising from numerous practical and scientific applications in diverse fields, ranging from quantum physics and robotics to neuroscience. In the most general form, a (smooth) control-affine system defined on a smooth manifold $M$ can be represented as
\begin{align}
\label{eq:control_affine}
\frac{d}{dt}x(t)=f(x(t))+\sum_{i=1}^mu_i(t)g(x(t)),
\end{align}
where $f$ and $g_i$, $i=1,\dots,m$ are smooth vector fields on $M$, and $u_i:\mathbb{R}\rightarrow\mathbb{R}$ are the control inputs. The control-linear form of the system in \eqref{eq:control_affine} leads to a particularly neat representation of the associated Koopman system.

\begin{theorem}[Koopman bilinearization]
\label{thm:Koopman_bilinearization}
Given a control-affine system defined on a smooth manifold $M$ as in \eqref{eq:control_affine}, the associated Koopman system is a bilinear system defined on ${\rm Aut}(C^\infty(M))$, given by,
\begin{align}
\frac{d}{dt}U_t&=U_t\mathcal{L}_f+\sum_{i=1}^mu_i(t)U_t\mathcal{L}_{g_i}\nonumber\\
&=\mathcal{L}_fU_t+\sum_{i=1}^mu_i(t)\mathcal{L}_{g_i}U_t. \label{eq:Koopman_control_affine}
\end{align}
\end{theorem}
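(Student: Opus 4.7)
The plan is to deduce the bilinear form directly from Proposition \ref{prop:Koopman_control_group}, which was already established for a general (possibly time-dependent) control system with completeness assumption on the driving vector field $F$. Applied to the control-affine system in \eqref{eq:control_affine}, one has $F(t,x,u) = f(x) + \sum_{i=1}^m u_i(t) g_i(x)$, and part (3) of that proposition immediately gives
\begin{align*}
\frac{d}{dt} U_t = \mathcal{L}_F U_t = U_t \mathcal{L}_F.
\end{align*}
So the content of the theorem is the identification $\mathcal{L}_F = \mathcal{L}_f + \sum_{i=1}^m u_i(t)\,\mathcal{L}_{g_i}$, after which the two displayed forms follow by left/right composition with $U_t$ and pulling the scalar $u_i(t)$ outside the operator composition.

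The first step I would carry out is to verify linearity of the Lie derivative in its vector-field argument when acting on observables. Since $\mathcal{L}_f$ and $\mathcal{L}_{g_i}$ act on any $h \in C^\infty(M)$ as the directional derivatives $\mathcal{L}_f h = dh(f)$ and $\mathcal{L}_{g_i} h = dh(g_i)$ (as used in the proof of Theorem \ref{thm:Koopman_system}), the fibrewise $\mathbb{R}$-linearity of the cotangent covector $dh_x$ gives
\begin{align*}
\mathcal{L}_F h = dh\bigl(f + \sum_{i=1}^m u_i(t) g_i\bigr) = dh(f) + \sum_{i=1}^m u_i(t)\, dh(g_i) = \mathcal{L}_f h + \sum_{i=1}^m u_i(t)\, \mathcal{L}_{g_i} h,
\end{align*}
where $u_i(t)$ are scalars at each fixed $t$. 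Hence $\mathcal{L}_F = \mathcal{L}_f + \sum_{i=1}^m u_i(t)\,\mathcal{L}_{g_i}$ as linear operators on $C^\infty(M)$.

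Substituting this decomposition into $\frac{d}{dt}U_t = U_t \mathcal{L}_F$ and into $\frac{d}{dt}U_t = \mathcal{L}_F U_t$ yields the two equalities in \eqref{eq:Koopman_control_affine}, and the right-hand sides are bilinear in the pair $(U_t, u)$: linear in $U_t$ with $u$ fixed, and linear (indeed affine, through the drift $\mathcal{L}_f U_t$ or $U_t\mathcal{L}_f$) in $u$ with $U_t$ fixed, so the system qualifies as a bilinear control system on ${\rm Aut}(C^\infty(M))$.

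The step I expect to require the most care is not the algebra, which is essentially one line, but making sure that the time-dependence introduced by the control inputs $u_i(t)$ does not invalidate the hypotheses needed to apply Proposition \ref{prop:Koopman_control_group}. In particular, I would note that completeness is assumed in the statement of Proposition \ref{prop:Koopman_control_group} for each fixed choice of control, that the starting time is fixed at $0$ as stipulated in the discussion preceding Proposition \ref{prop:Koopman_control_group}, and that the Lie derivative computation is pointwise in $t$ so the scalar factor $u_i(t)$ factors out of the spatial operator without issue. Once this is acknowledged, the remainder of the argument reduces to the linearity identity above.
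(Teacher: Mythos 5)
Your proposal is correct and follows essentially the same route as the paper: the paper's proof likewise invokes property 3) of Proposition \ref{prop:Koopman_control_group} with $F=f+\sum_{i=1}^m u_i g_i$ and concludes by the linearity of the Lie derivative and the Koopman operator. Your additional verification that $\mathcal{L}_F h = dh(F)$ splits by fibrewise linearity of $dh_x$ simply makes explicit the linearity step the paper asserts.
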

\vspace{0.2cm}
\begin{proof}
The result directly follows from the property 3) in Proposition \ref{prop:Koopman_control_group} by choosing $F=f+\sum_{i=1}^mu_i{g_i}$ and the linearity of the Lie derivative and Koopman operator.  
\end{proof}

Similar to the systems in \eqref{eq:Koopman_system} and \eqref{eq:Koopman_system_control}, the Koopman system in \eqref{eq:Koopman_control_affine} holds in the sense of the strong operator topology, i.e., 
\begin{align*}
\frac{d}{dt}U_th&=U_t\mathcal{L}_fh+\sum_{i=1}^mu_i(t)U_t\mathcal{L}_{g_i}h\\
&=\mathcal{L}_fU_th	+\sum_{i=1}^mu_i(t)\mathcal{L}_{g_i}U_th,
\end{align*}
for any $h\in C^\infty(M)$, because the infinitesimal generator $\frac{d}{dt}U_t$ is derived in this topology as shown in Section \ref{sec:Koopman_system}.

\begin{remark}
\label{rmk:commutative}
The Koopman system in \eqref{eq:Koopman_control_affine} requires to be interpreted with great caution: the commutativity between the Koopman operator $U_t$ and the Lie derivative operator $\mathcal{L}_{f+\sum_{i=1}^mu_ig}$ by no means implies the commutativity between $U_t$ and $\mathcal{L}_f$ as well as $U_t$ and $\mathcal{L}_{g_i}$ for each $i=1,\dots,m$. Taking $\mathcal{L}_f$ for instance, as indicated by the proof of Theorem \ref{thm:Koopman_system}, because $\mathcal{L}_{f+\sum_{i=1}^mu_ig}$ is the infinitesimal generator of $U_t$, equivalently the vector field $f+\sum_{i=1}^mu_ig$ generates the flow of the system, the commutativity between $U_t$ and $\mathcal{L}_f$ requires $\mathcal{L}_f(f+\sum_{i=1}^mu_ig_i)=\sum_{i=1}^mu_i\mathcal{L}_fg_i=0$, for which a sufficient condition is $\mathcal{L}_fg_i=[f,g_i]=0$ for all $i=1,\dots,m$. This in turn gives rise to a Koopman operator-theoretic characterization of the general fact in differential geometry that two smooth vector fields on a smooth manifold is commutative, that is, their Lie bracket vanishes, if and only if one of them is invariant under the flow of the other one \cite{Lee12}. 
\end{remark}

Remark \ref{rmk:commutative} further sheds light on the intimacy between Koopman operators and Lie brackets of vector fields. It is well-known in geometric control theory that, for a control affine system as in \eqref{eq:control_affine}, the iterative Lie brackets of, or more specifically, the Lie algebra generated by, the drift and control vector fields determine the controllability of the system \cite{Brockett76,Hermes78,Isidori95,Jurdjevic96,Brockett14}. This strongly inspires the study of controllability of control-affine systems through the associated Koopman systems and vice versa. To this end, it is required to investigate Koopman systems further from the perspective of geometric control theory.

\subsubsection{Koopman systems on Lie groups}

A remarkable feature of Koopman systems associated with control-affine systems is the bilinearity as proved in Theorem \ref{thm:Koopman_bilinearization}. Right or left invariant control systems defined on Lie groups constitute a large class of bilinear systems, and have been widely studied in geometric control theory \cite{Brockett72,Jurdjevic72,Brockett73b}. Fortunately, the Koopman system associated with a control-affine system as in \eqref{eq:Koopman_control_affine} falls into this category of bilinear systems, but it is generally an infinite-dimensional system, which presents a challenge to our study. In particular, the challenge mainly comes from the infinite-dimension of the state-space Lie group ${\rm Aut}(C^\infty(M))$ of the Koopman system in \eqref{eq:Koopman_control_affine}. 

To avoid some technical topological complications for the purpose of illuminating the main idea, we impose the compactness condition that that the state-space manifold $M$ of the control-affine system in \eqref{eq:control_affine} is closed, that is, compact, connected, and without boundary. As a result, any smooth vector field on $M$ are complete and hence generates a global flow \cite{Lee12}. Moreover, the family of seminorms in \eqref{eq:C^infty_topology} generating the $C^\infty$-topology on $C^\infty(M)$ are reduced to the form
$$\|h\|_k=\sup_{x\in M}|D^kh(x)|,$$
which then coincides with the Whitney $C^\infty$-topology so that pointwise addition and multiplication of smooth functions are continuous  \cite{Golubitsky73}. This further leads to the continuity of the map $\mathbb{R}\rightarrow{\rm Diff}(M)$ given by $t\mapsto\Phi_t$ for any flow $\Phi$ on $M$ with $M$ denoting the group of diffeomorphisms from $M$ onto $M$ \cite{Golubitsky73}. The use of these results is to identify the Lie algebra of ${\rm Aut}(C^\infty(M))$, for analyzing controllability of the Koopman system in \eqref{eq:Koopman_control_affine} defined on ${\rm Aut}(C^\infty(M))$.

\begin{lemma}
\label{lem:Lie_algebra}
Let $M$ be a closed manifold, then ${\rm Aut}(C^\infty(M))$ is a Lie group with the Lie algebra consisting of Lie derivatives with respect to smooth vector fields on $M$ under the Whitney $C^\infty$-topology on $C^\infty(M)$.
\end{lemma}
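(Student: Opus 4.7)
The plan is to identify ${\rm Aut}(C^\infty(M))$ with the diffeomorphism group ${\rm Diff}(M)$ through the Koopman correspondence $\Phi \mapsto U_\Phi$, where $U_\Phi h = h \circ \Phi$, and then transport the known Fr\'{e}chet Lie group structure on ${\rm Diff}(M)$ across this identification. Concretely, I would first observe that, by a Milnor/Gelfand-type duality, every continuous $\mathbb{R}$-algebra automorphism of $C^\infty(M)$ under the Whitney $C^\infty$-topology arises as $U_\Phi$ for a unique $\Phi \in {\rm Diff}(M)$; in the other direction, Proposition \ref{prop:Koopman_bounded} (applied to the flow of any complete vector field, which all are since $M$ is closed) shows each $U_\Phi$ is bounded and hence lies in ${\rm Aut}(C^\infty(M))$. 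The composition law $U_{\Phi \circ \Psi} = U_\Psi U_\Phi$ makes the resulting bijection a group anti-isomorphism.

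Next I would invoke the classical fact that, for a closed $M$, ${\rm Diff}(M)$ is a Fr\'{e}chet Lie group whose Lie algebra is $C^\infty(TM)$ under the standard Lie bracket of vector fields; the continuity of composition and pointwise algebraic operations in the Whitney $C^\infty$-topology (cited in the paragraph preceding the lemma) is precisely what is needed to install this structure. Transporting it across the Koopman bijection endows ${\rm Aut}(C^\infty(M))$ with a Fr\'{e}chet Lie group structure whose tangent space at the identity is identified with $C^\infty(TM)$. To make the identification explicit I would differentiate the one-parameter subgroup $t \mapsto U_{\Phi^X_t}$ generated by a vector field $X$ at $t=0$; by Theorem \ref{thm:Koopman_generator} this derivative is exactly $\mathcal{L}_X$, so $X \mapsto \mathcal{L}_X$ is a linear bijection from $C^\infty(TM)$ onto the tangent space at the identity. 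The classical commutator identity $[\mathcal{L}_X, \mathcal{L}_Y] = \mathcal{L}_{[X,Y]}$ then shows this map is a Lie algebra morphism (with the sign convention forced by the anti-isomorphism above), identifying the Lie algebra of ${\rm Aut}(C^\infty(M))$ with the space of Lie derivatives as required.

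The step I expect to be the main obstacle is rigorously installing the infinite-dimensional Lie group structure on ${\rm Diff}(M)$ and transporting it through the Koopman correspondence: verifying smoothness of composition and inversion in Fr\'{e}chet (or convenient) calculus, and confirming that the exponential map $\mathcal{L}_X \mapsto U^X_1$ is a well-defined smooth local diffeomorphism near the identity. A secondary subtlety is the interpretation of ${\rm Aut}(C^\infty(M))$ itself: read as the full group of topological linear automorphisms, the assertion fails because arbitrary bounded operators would appear in the Lie algebra, so the proof should explicitly restrict to (or characterize) the subgroup of algebra automorphisms, which is precisely the subgroup generated by Koopman operators and the natural object for the control-theoretic use the lemma is put to in the sequel.
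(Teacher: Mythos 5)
Your proposal follows essentially the same route as the paper's proof: identify ${\rm Aut}(C^\infty(M))$ with ${\rm Diff}(M)$ by a Gelfand-type duality (the paper notes that continuity of pointwise addition and multiplication makes $C^\infty(M)$ a commutative seminormed algebra and then invokes the Gelfand--Naimark theorem), import the known Lie group structure of ${\rm Diff}(M)$ for closed $M$, and use the naturality identity $[\mathcal{L}_X,\mathcal{L}_Y]=\mathcal{L}_{[X,Y]}$ to conclude that the Lie algebra is the space of Lie derivatives. Your second caveat --- that ${\rm Aut}(C^\infty(M))$ must be read as algebra automorphisms rather than arbitrary bounded linear bijections --- is well taken and is precisely what the Gelfand--Naimark step enforces. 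One genuine correction, however: the verification you single out as the main obstacle, namely that the exponential map $\mathcal{L}_X\mapsto U^X_1$ is a smooth local diffeomorphism near the identity, is \emph{false} for ${\rm Diff}(M)$; the paper itself recalls Milnor's example on $\mathbb{S}^1$ showing that $\exp:\mathfrak{X}(M)\rightarrow{\rm Diff}(M)$ is neither locally injective nor locally surjective. Fortunately this is not needed for the lemma: the paper sidesteps it by identifying the tangent space $T_{{\rm id}_M}{\rm Diff}(M)$ with $\mathfrak{X}(M)=C^\infty(M,TM)$ using the \emph{Riemannian} exponential on $M$ (normal coordinates, available since $M$ is compact) to show $C^\infty(M,TM)$ is tangent to $C^\infty(M,M)$ at ${\rm id}_M$, combined with the openness of ${\rm Diff}(M)$ in $C^\infty(M,M)$ in the Whitney $C^\infty$-topology; your alternative of differentiating the one-parameter subgroups $t\mapsto U_{\Phi^X_t}$ via Theorem \ref{thm:Koopman_generator} correctly produces the map $X\mapsto\mathcal{L}_X$ into the tangent space, but establishing that this map is \emph{onto} the tangent space requires an argument of the paper's kind rather than any appeal to the group exponential.
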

\begin{proof}
The continuity of pointwise addition and multiplication implies that $C^\infty(M)$ is a commutative seminormed $C^*$-algebra. Then, the application of Gelfand–Naimark theorem implies that ${\rm Aut}(C^\infty(M))$ is isomorphic to ${\rm Diff}(M)$  \cite{Palmer01}. To identify the Lie algebra of ${\rm Diff}(M)$, it is equivalent to identify the tangent space at the identity map ${\rm id}_M$. To this end, we choose a Riemannian metric on $M$. Because $M$ is compact, the Riemannian exponential map $\exp:TM\rightarrow M$ is globally well-defined and a local homeomorphism following from $d\exp_p(0)={\rm id}_{T_pM}$, giving the normal coordinates of $M$ \cite{Carmo92}. Therefore, by varying $p$ in $M$, we obtain that $C^{\infty}(M,TM)$, the space of smooth sections of the tangent bundle $TM$ of $M$, is tangent to $C^\infty(M,M)$, the space of smooth maps from $M$ to $M$, at ${\rm id}_M$. Because ${\rm Diff}(M)$ is an open submanifold of $C^\infty(M,M)$ in the Whitney $C^\infty$-topology, $C^{\infty}(M,TM)$ is also the tangent space of ${\rm Diff}(M)$ at ${\rm id}_M$. Note that $C^\infty(M,TM)$ is just the space $\mathfrak{X}(M)$ of smooth vector fields on $M$. In addition, $\mathfrak{X}(M)$ is a Lie algebra under the Lie bracket operation, and hence the Lie algebra of ${\rm Diff}(M)$. Together with the naturality of Lie derivatives, that is, $\mathcal{L}_{[f,g]}=[\mathcal{L}_f,\mathcal{L}_g]$ for any $f,g\in\mathfrak{X}(M)$, the space of Lie derivatives is a Lie algebra isomorphic to $\mathfrak{X}(M)$. Since Lie derivatives are derivations on $C^\infty(M)$, we conclude that the Lie algebra of ${\rm Aut}(C^\infty(M))$ is exactly the space of Lie derivatives.
\end{proof}

Computationally, given a coordinate chart $(x_1,\dots,x_n)$ of $M$ defined on an open subset $U\subseteq M$, any vector field $f\in\mathfrak{X}(M)$ admits the local coordinate representation $f=\sum_{i=1}^nf_i\frac{\partial}{\partial x_i}$ for some $f_i\in C^\infty(U)$. Correspondingly, for any observable $h\in C^\infty(M)$ of the system in \eqref{eq:control_affine}, we have
\begin{align*}
\mathcal{L}_fh=dh(f)=\sum_{i=1}^nf_i\frac{\partial h}{\partial x_i}
\end{align*}
and hence the local coordinate representation of the Lie derivative operator $\mathcal{L}_f$ is identical to that of $f\in\mathfrak{X}(M)$ as 
\begin{align}
\label{eq:Lie_derivative}
\mathcal{L}_f=\sum_{i=1}^nf_i\frac{\partial}{\partial x_i}.
\end{align}
This in turn gives a coordinate-dependent argument for the isomorphism between the Lie algebras of smooth vectors fields and Lie derivatives on $M$. From a different perspective, we notice that any partial differential operator of homogenous order 1 on $M$ has the local representation in the form of \eqref{eq:Lie_derivative}. Consequently, the Lie algebra of ${\rm Aut}(C^\infty(M))$ can also be identified with the space of homogeneous first order partial differential operators on $M$. 

\begin{remark}[System-theoretic Gelfand duality]
As indicated in the proof of Lemma \ref{lem:Lie_algebra}, ${\rm Diff}(M)$ is related to ${\rm Aut}(C^\infty(M))$ by the Gelfand representation. On the dynamical systems level, the time-$t$ flow of the control-affine system in \eqref{eq:control_affine} is always an element of ${\rm Diff}(M)$ for any choice of the control inputs $u_i$, $i=1,\dots,m$ as defined in Section \ref{sec:flow}. Therefore, the associated Koopman system defined on ${\rm Aut}(C^\infty(M))$ in \eqref{eq:Koopman_control_affine} associated with it can be interpreted as the "Gelfrand representation" of the system in \eqref{eq:control_affine} on the space $C^\infty(M)$ of observables, which further reveals the Gelfand duality between control-affine systems and bilinear Koopman systems in the algebraic sense.
\end{remark}

\subsubsection{Controllability of Koopman systems}

Because controllability of a control-affine system is determined by the Lie algebra generated by its drift and control vector fields, the aforementioned identification of these vector fields with the corresponding Lie derivative operators, governing the dynamics of the associated Koopman system, opens up the possibility to study controllability of control-affine systems by utilizing their associated bilinear Koopman systems. In particular, the interpretation of Lie derivatives as partial differential operators gives rise to the follows operator-theoretic characterization of controllability for control-affine systems.

\begin{theorem}
\label{thm:Koopman_controllability_d}
A control-affine system defined on a closed manifold $M$ as in \eqref{eq:control_affine} is controllable if and only if the associated Koopman system as in \eqref{eq:Koopman_control_affine} acts on the space of observables $C^\infty(M)$ by the de Rham differential operator $d:C^\infty(M)\rightarrow C^\infty(M,T^*M)$, where $C^\infty(M,T^*M)$ denotes the space of smooth sections of the cotangent bundle $T^*M$ of $M$, i.e., the space of differential 1-forms on $M$.   
\end{theorem}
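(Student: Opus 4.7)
The plan is to bridge three objects through the pointwise identity $\mathcal{L}_X h = dh(X)$, which holds for every $X\in\mathfrak{X}(M)$ and $h\in C^\infty(M)$: the Lie subalgebra $\mathfrak{g}\subseteq\mathfrak{X}(M)$ generated by $\{f,g_1,\ldots,g_m\}$, the Lie subalgebra of the Lie algebra of $\mathrm{Aut}(C^\infty(M))$ generated by $\{\mathcal{L}_f,\mathcal{L}_{g_1},\ldots,\mathcal{L}_{g_m}\}$, and the de Rham differential $d:C^\infty(M)\to C^\infty(M,T^*M)$.

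First I would invoke Lemma~\ref{lem:Lie_algebra} together with the naturality relation $\mathcal{L}_{[X,Y]}=[\mathcal{L}_X,\mathcal{L}_Y]$ to upgrade the assignment $X\mapsto\mathcal{L}_X$ to a Lie algebra isomorphism between $\mathfrak{g}$ and the Koopman Lie subalgebra generated by $\mathcal{L}_f$ and $\mathcal{L}_{g_i}$. Under this identification, the infinitesimal action of the Koopman system on an observable $h$ produces precisely the family of scalar functions $\{\mathcal{L}_Xh = dh(X):X\in\mathfrak{g}\}$. I would then interpret the statement ``the Koopman system acts on $C^\infty(M)$ by the de Rham differential $d$'' as the condition that, for every $h\in C^\infty(M)$, the pointwise family $\{dh(X)(x):X\in\mathfrak{g}\}$ determines the 1-form $dh\in C^\infty(M,T^*M)$; equivalently, that the pointwise evaluation $\mathfrak{g}(x)\subseteq T_xM$ is surjective for every $x\in M$. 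This linear-algebraic bridge is the crux of the argument.

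For the forward direction, assuming controllability of the control-affine system on the closed manifold $M$, I would apply the Lie algebra rank condition — which on compact connected manifolds is necessary and sufficient for controllability of control-affine systems once the recurrence of flows is combined with the bracket-generating property — to obtain $\mathfrak{g}(x)=T_xM$ for every $x\in M$. Then the covector $dh(x)$ is fully recovered from its evaluations on $\mathfrak{g}(x)$, which are exactly the Koopman actions at $x$, so the Koopman system acts via $d$ in the sense formulated above. For the converse, supposing the Koopman action recovers $dh$ pointwise, I would argue by contradiction: if $\mathfrak{g}(x_0)\subsetneq T_{x_0}M$ at some $x_0$, I can construct, using normal coordinates around $x_0$ and a bump function, an observable $h$ with $dh(x_0)\neq 0$ annihilating $\mathfrak{g}(x_0)$. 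Every $\mathcal{L}_Xh(x_0)=dh(x_0)(X(x_0))$ then vanishes for $X\in\mathfrak{g}$, contradicting the recovery of $dh(x_0)$. Hence the LARC holds, and controllability follows.

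The main obstacle I anticipate is not the LARC step, which is classical, but pinning down the exact meaning of ``the Koopman system acts by the de Rham differential $d$''. The phrase conflates an operator-theoretic statement (the orbit of the Koopman Lie algebra on $h$) with a geometric one (the reconstruction of the 1-form $dh$ pointwise), and making this precise — showing that the two formulations really are equivalent, module the identification $\mathfrak{g}\cong\{\mathcal{L}_X:X\in\mathfrak{g}\}$ — is where the substantive work lies. Once that formulation is in place, the remainder reduces to the elementary fact that a covector is determined by its values on any spanning set of the tangent space, combined with the LARC.
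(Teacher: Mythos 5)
Your proposal is correct and its core is the same as the paper's: both arguments run through the Lie algebra rank condition, the isomorphism $X\mapsto\mathcal{L}_X$ from Lemma~\ref{lem:Lie_algebra}, and the pointwise identity $\mathcal{L}_Xh(x)=dh_x(X(x))$, so that controllability (i.e., $\mathfrak{g}(x)=T_xM$ at every $x$) lets the covector $dh_x$ be recovered from the Koopman action. Where you differ is in completeness rather than in method. First, the paper's proof only carries out the forward implication (controllability $\Rightarrow$ the action is $d$) and leaves the converse implicit; your bump-function separation argument — producing an $h$ with $dh(x_0)\neq 0$ annihilating a proper subspace $\mathfrak{g}(x_0)\subsetneq T_{x_0}M$ — supplies the missing ``if'' direction explicitly, and it is the right argument for it. Second, you are more careful on two points the paper glosses over: (i) you pin down what ``acts by the de Rham differential'' actually means (pointwise reconstruction of $dh$ from the orbit of the Koopman Lie algebra), whereas the paper jumps from ``$v(x)$ is arbitrary'' to ``$\frac{d}{dt}U_t=d$'' without saying in what sense a family of scalar derivatives assembles into a $1$-form-valued operator; and (ii) you flag that for systems with drift the LARC gives accessibility rather than controllability, and that closedness of $M$ plus a recurrence property is what upgrades it — the paper simply cites the LARC as an equivalence. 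One residual caution that applies to both arguments: the paper's step ``there exist control inputs such that $\frac{d}{dt}U_th(x)=\mathcal{L}_vU_th(x)$'' for an arbitrary $v\in\mathfrak{g}$ is loose, since bracket directions are reached only asymptotically by switching controls, not as instantaneous velocities; your reformulation in terms of the pointwise span $\mathfrak{g}(x)$ avoids leaning on that claim directly, which is a further small improvement.
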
 
\begin{proof}
According to the Lie algebra rank condition (LARC), the system in \eqref{eq:control_affine} is controllable on $M$ if and only if the Lie algebra generated by the vector fields $f$ and $g_i$, $i=1,\dots,m$, evaluated at each point $x\in M$, is the tangent space $T_xM$ of $M$ at the point $x$ \cite{Isidori95}. The isomorphism between the Lie algebras generated by smooth vector fields and Lie derivative operators revealed in the proof of Lemma \ref{lem:Lie_algebra} then implies that, for any vector field $v\in\mathfrak{X}(M)$, the tangent vector $v(x)\in T_xM$ belongs to the Lie algebra generated by $\mathcal{L}_f$ and $\mathcal{L}_{g_i}$, $i=1,\dots,m$, evaluated at $x$. Recall the action of Lie derivatives on smooth functions as $\mathcal{L}_vh(x)=dh(v)|_x=dh_x(v(x))$, the directional derivative of $h$ at $x$ along the direction of $v(x)$, the above observation implies that there exist control inputs $u_i(t)$, $i=1,\dots,m$ such that $\frac{d}{dt}U_th(x)=\mathcal{L}_{v}U_th(x)=d(U_th)_x(v(x))$. Because $v(x)\in T_xM$ is arbitrary, we obtain $\frac{d}{dt}U_th=d(U_th)$. In addition, note that $U_t:C^\infty(M)\rightarrow C^{\infty}(M)$ is invertible, which particularly implies the subjectivity of $U_t$ so that $U_th\in C^\infty(M)$ can be arbitrarily chosen as well. Therefore, we arrive at the conclusion $\frac{d}{dt}U_t=d$. 
\end{proof}

The de Rham differential is globally defined for any smooth manifold. Therefore, Theorem \ref{thm:Koopman_controllability_d} also gives rise to a global characterization of controllability for a control-affine system in terms of differential operators, contrary to the Lie algebra rank condition (LARC) which involves the local examination of the Lie algebra generated by the drift and control vector fields of the system at every point in the state-space manifold of the system. 

To generalize Theorem \ref{thm:Koopman_controllability_d} to uncontrollable systems, we notice that a system is always controllable on its controllable submanifold. In particular, for a control-affine system as in \eqref{eq:control_affine}, the Frobenius theorem further implies that the controllable submanifold is a maximal integral manifold of the involutive distribution, that is, the Lie subalgebra of the vector fields, generated by the drift and control vector fields of the system, and the collection of all controllable submanifolds of the system starting from different initial conditions forms a foliation of the state-space manifold. Then, the proof of Theorem \ref{thm:Koopman_controllability_d} can be directly adopted by restricting to the controllable submanifold of the system in \eqref{eq:control_affine}.

\begin{corollary}
\label{cor:Koopman_controllability_d}
Consider the control-affine system in \eqref{eq:control_affine} defined on the manifold $M$. Let $N\subseteq M$ be the controllable submanifold of the system and $\iota:N\hookrightarrow M$ denote the inclusion. Then, the associated Koopman system in \eqref{eq:Koopman_control_affine} acts on the space of observables $C^\infty(M)$ by $\iota^* d$, where $d$ denotes the de Rham differential operator on $M$ and $\iota^*$ is the pullback of $\iota$. 
\end{corollary}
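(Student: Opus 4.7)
My plan is to reduce the claim to Theorem \ref{thm:Koopman_controllability_d} by restricting the system to its controllable submanifold $N$ and then transporting the result back to $M$ via naturality of the exterior derivative. The paragraph preceding the statement already sketches this strategy; I would make the intertwining between the restricted and ambient Koopman operators explicit.

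First, by the Frobenius theorem, $N$ is a maximal integral manifold of the involutive distribution generated by $f, g_1, \dots, g_m$, and each of these vector fields is tangent to $N$. Hence the pullbacks $\iota^* f, \iota^* g_1, \dots, \iota^* g_m$ are well-defined smooth vector fields on $N$ and they define a control-affine system on $N$ whose flow $\widetilde\Phi_t$ agrees with the restriction of the flow $\Phi_t$ of the original system to initial conditions in $N$. By construction the restricted system is controllable on $N$, so Theorem \ref{thm:Koopman_controllability_d} applies to it and yields $\frac{d}{dt}\widetilde U_t = d_N$, where $\widetilde U_t:C^\infty(N)\to C^\infty(N)$ denotes the Koopman operator of the restricted system and $d_N$ the de Rham differential on $N$.

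Second, I would establish the intertwining identity $\iota^* \circ U_t = \widetilde U_t \circ \iota^*$. Since trajectories of the original system starting in $N$ never leave $N$, for every $h\in C^\infty(M)$ and $x\in N$ one has $(U_t h)(x) = h(\Phi_t(x)) = (\iota^* h)(\widetilde\Phi_t(x)) = \widetilde U_t(\iota^* h)(x)$, proving the identity. Differentiating at $t=0$ along the family of control inputs realizing $\frac{d}{dt}\widetilde U_t = d_N$, and invoking the naturality of the exterior derivative under pullback, $d_N\circ\iota^* = \iota^*\circ d$, we obtain $\iota^*\bigl(\tfrac{d}{dt} U_t h\bigr) = d_N(\iota^* h) = \iota^*(dh)$ for every $h\in C^\infty(M)$. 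This is precisely the assertion that the Koopman system acts on $C^\infty(M)$ by $\iota^*d$.

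The main obstacle is interpretive rather than computational: the identity $\frac{d}{dt}U_t = \iota^*d$ couples an operator on $C^\infty(M)$ to a target landing in differential $1$-forms on $N$, so one must state it as an intertwining relation modulo the ideal of smooth functions vanishing on $N$ (equivalently, as an operator $C^\infty(M)\to C^\infty(N,T^*N)$). A secondary technicality is that the controllable submanifold $N$ produced by Frobenius is in general only an immersed submanifold of $M$, so the pullback $\iota^*$ must be understood as a map of Fr\'echet spaces of smooth sections rather than as a strict topological restriction; fortunately this subtlety does not affect the algebraic naturality $d_N\circ\iota^* = \iota^*\circ d$ on which the argument rests.
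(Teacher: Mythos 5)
Your argument is correct and reaches the same conclusion, but it is organized genuinely differently from the paper's proof. The paper does not pass through a restricted system on $N$ at all: it re-runs the proof of Theorem \ref{thm:Koopman_controllability_d} with $\mathfrak{X}(N)$ in place of $\mathfrak{X}(M)$ --- since the Lie algebra generated by $f,g_i$ spans $T_xN$ at each $x\in N$, every $v\in\mathfrak{X}(N)$ is realizable by controls, and the chain $\frac{d}{dt}U_th(x)=d(U_th)_x(v(x))=d(U_th)_{\iota(x)}(\iota_*v(x))=\iota^*d(U_th)(v)(x)$ yields $\frac{d}{dt}U_t=\iota^*d$ directly, the pullback entering only through its definition $\iota^*\omega(v)=\omega(\iota_*v)$. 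You instead restrict the system to $N$, invoke Theorem \ref{thm:Koopman_controllability_d} for the restricted, now controllable, system to get $d_N$, and transport the result back via the intertwining $\iota^*\circ U_t=\widetilde U_t\circ\iota^*$ and the naturality $d_N\circ\iota^*=\iota^*\circ d$. Your route is more modular and makes explicit an identity that the paper only uses implicitly later (in the proof of Proposition \ref{prop:Koopman_pushforward}, where $U_th=h\circ\Phi_t\circ\iota$ appears); one small cost is that Theorem \ref{thm:Koopman_controllability_d} is stated for \emph{closed} manifolds, whereas the integral manifold $N$ need not be compact or even embedded, so your reduction tacitly assumes the theorem extends to $N$ --- it does, because the LARC argument underlying it is local, but the paper's direct argument on $M$ sidesteps the issue. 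A minor terminological point: vector fields do not pull back along $\iota$; what you want is the restriction $f|_N\in\mathfrak{X}(N)$, well defined because $f$ is tangent to $N$, which is the convention the paper uses. Your closing caveats about the meaning of the identity $\frac{d}{dt}U_t=\iota^*d$ and the immersed-submanifold subtlety are accurate and are glossed over in the paper.
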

\begin{proof}
The condition that $N$ is the controllable submanifold of the system in \eqref{eq:control_affine} implies that the Lie subalgebra of $\mathfrak{X}(M)$ generated by the vector fields $f$ and $g_i$, governing the dynamics of the system, coincides with $T_xN$ evaluated at each $x\in N$. As a result, for any $v\in\mathfrak{X}(N)$, there are control inputs $u_i$, $i=1,\dots,m$ such that $\frac{d}{dt}U_th(x)=\mathcal{L}_{v(x)}U_th(x)=d(U_th)_x(v(x))=d(U_th)_{\iota(x)}(\iota_*v(x))=\iota^*d(U_th)_x(v(x))$ by the definition of the pullback \cite{Lee12}, where $\iota_*$ denotes the pushforward of $\iota$. Because $h\in C^\infty(M)$ and $v\in\mathfrak{X}(N)$ are arbitrary, we conclude $\frac{d}{dt}U_t=\iota^*d$ as desired.
\end{proof}

On the other hand, the proof of Theorem \ref{thm:Koopman_controllability_d} further sheds light on the controllability analysis of Koopman systems through their actions on the observables of the corresponding control-affine systems. To put forward this idea, we now turn our attention to controllability of the Koopman system on ${\rm Aut}(C^\infty(M))$. However, it is so unfortunate that Koopman systems are almost never controllable on ${\rm Aut}(C^\infty(M))$, even though the corresponding control-affine systems controllable on $M$ as shown in the following example.
\begin{example}[Uncontrollability of Koopman systems]
consider the control-affine system
\begin{align*}
\frac{d}{dt}x(t)=\sum_{i=1}^nu_i(t)e_i(x(t))
\end{align*}
defined on an $n$-dimensional smooth manifold $M$, where $e_i\in\mathfrak{X}(M)$, $i=1,\dots,n$ form a global frame for $M$ restricting to the coordinate vector fields $e_i=\frac{\partial}{\partial x_i}$ in a coordinate chart. Then, the system is clearly controllable on $M$ by the LARC, since as a frame for $M$, they satisfy ${\rm span}\{e_1(p),\dots,e_n(p)\}=T_pM$ for all $p\in M$. However, because of $e_i=\frac{\partial}{\partial x_i}$, $[e_i,e_j]=0$, equivalently $[\mathcal{L}_{e_i},\mathcal{L}_{e_j}]=\mathcal{L}_{[e_i,e_j]}=0$, holds for all $i,j=1,\dots,n$. Correspondingly, the Lie algebra generated by $\mathcal{L}_{e_i}$, $i=1,\dots,n$ is just ${\rm span}\{\mathcal{L}_{e_1},\dots, \mathcal{L}_{e_n}\}$, which is a proper Lie subalgebra of $\mathfrak{X}(M)$, only consisting of ``constant vector fields", i.e, vector fields in the form of $f=\sum_{i=1}^nf_i\frac{\partial}{\partial x_i}$ with all of $f_i$ constant functions on $M$. In the terminology of algebra, it means that the Lie algebra generated by $\mathcal{L}_{e_i}$, $i=1,\dots,n$ fails to be a module over $C^\infty(M)$ so that Koopman operators generated by flows of ``nonconstant vector fields" on $M$ are not in the reachable set of the associated Koopman system
\begin{align}
\label{eq:Koopman_uncontrollable}
\frac{d}{dt}U_t=\sum_{i=1}^nu_i(t)\mathcal{L}_{e_i}U_t,
\end{align}
which in turn  implies uncontrollability of the Koopman system on ${\rm Aut}(C^\infty(M))$.
\end{example}

Elaborating uncontrollability of the Koopman system in \eqref{eq:Koopman_uncontrollable} from a purely control-theoretic perspective, the trouble that the Lie algebra generated by $\mathcal{L}_{e_i}$ fails to be a $C^\infty(M)$-module is caused by the independence of the control inputs $u_i$ from the state-space $M$ of the corresponding control-affine system, meaning, $u_i$ are function defined on $\mathbb{R}$ solely instead of $\mathbb{R}\times M$ as in the case of control of partial differential equation systems. However, this is definitely not the only obstacle to controllability of Koopman systems. Another one follows from the definition of Koopman operators as composition operators with flows of vector fields on $M$, which restricts the reachable set of the Koopman system to elements in ${\rm Aut}(C^\infty(M))$, equivalently ${\rm Diff}(M)$, generated by flows of vector fields. In general, not every diffeomorphism on $M$, even in arbitrary small neighborhood of the identity map ${\rm id}_M$ of $M$ in ${\rm Diff}(M)$ in the $C^\infty$-topology, comes from a flow of some vector field. The most notable example was provided by the American mathematician John Milnor in 1984 \cite{Milnor84}: The function $\Phi(\theta)=\theta+\frac{\pi}{n}+\varepsilon\sin^2(n\theta)$ is a diffeomorphism of the unit circle $\mathbb{S}^1$ for any $n\in\mathbb{N}$ and $\varepsilon>0$, and particularly, by choosing large enough $n$ and small enough $\varepsilon$, $f$ can be arbitrarily close to ${\rm id}_M$. The points in $\mathbb{S}^1$ of the form $\theta_k=k\pi/n$ with $k\in\mathbb{Z}$ are $2n$-periodic, i.e., $f^{2n}(\theta_k)=\theta_k$, where $f^{2n}$ denotes the $2n$-fold composition of $f$, but other points are all aperiodic. This fact prevents $f$ from being a flow of a vector field on $\mathbb{S}^1$, and further indicates that, in general, the exponential map from the Lie algebra $\mathfrak{X}(M)$ to the Lie group ${\rm Diff}(M)$ is not a local diffeomorphism, actually neither locally injective nor locally surjective, which demonstrates a significant difference between infinite-dimensional and finite-dimensional Lie groups. Moreover, this also disables the application of LARC to examine controllability of Koopman systems. 

Fortunately, by leveraging the algebraic structure of the Lie group ${\rm Diff}(M)$, it is still possible to characterize the controllable submanifold of the Koopman system defined on ${\rm Aut}(C^\infty(M))$

\begin{theorem}[Controllable submanifolds of Koopman systems]
\label{thm:Koopman_controllability}
Given a Koopman system defined on the Lie group ${\rm Aut}(C^\infty(M))$ as in \eqref{eq:Koopman_control_affine}, that is, 
$$\frac{d}{dt}U_t=\mathcal{L}_fU_t+\sum_{i=1}^mu_i(t)\mathcal{L}_{g_i}U_t$$
associated to a control-affine system defined on $M$ as in \eqref{eq:control_affine}, the controllable submanifold of the Koopman system is the identity component of the Lie subgroup of ${\rm Aut}(C^\infty(M))$ whose Lie algebra is the Lie subalgebra of $\mathfrak{X}(M)$ generated by $\mathcal{L}_f$ and $\mathcal{L}_{g_i}$, $i=1,\dots,n$.
\end{theorem}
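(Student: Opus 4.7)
The plan is to view the Koopman system as a right-invariant control system on the Lie group $G={\rm Aut}(C^\infty(M))$, whose Lie algebra $\mathfrak{g}$ was identified in Lemma \ref{lem:Lie_algebra} with the Lie algebra of Lie derivatives on $M$. Under piecewise-constant control inputs $u(t)\equiv v^{(k)}$ on each interval $[t_{k-1},t_k]$, the solution of \eqref{eq:Koopman_control_affine} is explicitly the ordered product of exponentials $U_{T}=\exp\!\big(\tau_N A(v^{(N)})\big)\cdots\exp\!\big(\tau_1 A(v^{(1)})\big)$, where $A(v)=\mathcal{L}_f+\sum_i v_i\mathcal{L}_{g_i}\in\mathfrak{g}$ and $\tau_k=t_k-t_{k-1}$. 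Let $\mathfrak{h}\subseteq\mathfrak{g}$ denote the Lie subalgebra generated by $\mathcal{L}_f$ and $\mathcal{L}_{g_i}$, $i=1,\dots,m$, and let $H$ denote the abstract subgroup of $G$ generated by $\{\exp(tX):t\in\mathbb{R},\,X\in\mathfrak{h}\}$ (not as the image of $\exp$, to avoid the Milnor obstruction). The goal is to show that the reachable set $R$ from the identity $I$ coincides with the identity component $H^0$ of $H$.

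The first step is the inclusion $R\subseteq H^0$. Since every piecewise-constant reachable state is a finite product of exponentials $\exp(\tau A(v))$ with $A(v)\in\mathfrak{h}$, it lies in $H$; continuity of $t\mapsto U_t$ in the strong operator topology (Proposition \ref{prop:Koopman_control_group} and Lemma \ref{lem:Koopman_continuous}) then places the path $t\mapsto U_t$ in the connected component of $I$, namely $H^0$. Extension from piecewise-constant to general admissible controls follows from a density/continuous-dependence argument on the flow, combined with closedness of $H^0$ under the strong operator topology. The second step is the reverse inclusion $H^0\subseteq R$. Here the key geometric-control device is the commutator approximation formula $\exp(-\tau Y)\exp(-\tau X)\exp(\tau Y)\exp(\tau X)=\exp\!\big(\tau^2[X,Y]+O(\tau^3)\big)$, which shows that trajectories can be steered infinitesimally in the direction of any Lie bracket $[\mathcal{L}_{g_i},\mathcal{L}_{g_j}]$ and $[\mathcal{L}_f,\mathcal{L}_{g_i}]$. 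Iterating the construction and invoking the chronological calculus of Agrachev--Gamkrelidze, every element of $\mathfrak{h}$ is realized as a tangent direction to $R$ at $I$, and right-invariance propagates this to every point of $R$. A compactness/connectedness argument along paths in $H^0$ then yields $H^0\subseteq R$.

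The main obstacle is purely infinite-dimensional: as stressed in the example of Milnor, the exponential map $\exp:\mathfrak{X}(M)\to{\rm Diff}(M)$ is neither locally injective nor locally surjective, so the classical LARC, which relies on the exponential being a local diffeomorphism, cannot be applied verbatim to conclude that the reachable set is an open subset of $H^0$. To circumvent this, the strategy is to work with $H$ as an \emph{algebraic} subgroup generated by one-parameter subgroups and to use the regular (in the sense of Kriegl--Michor) Lie group structure on ${\rm Diff}(M)$, under which $\mathfrak{h}$ integrates to a (possibly immersed) Lie subgroup with the stated Lie algebra; the Gelfand--Na{\u\i}mark identification ${\rm Aut}(C^\infty(M))\cong{\rm Diff}(M)$ from Lemma \ref{lem:Lie_algebra} then transports the conclusion back to $G$. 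The other subtlety is the presence of the drift $\mathcal{L}_f$: because the controls $u_i$ range over all of $\mathbb{R}$, they dominate the drift under large-amplitude rescaling, so the reachable set is symmetric enough along the control directions for the bracket approximation to produce both $\pm[X,Y]$, and the orbit theorem of Sussmann then certifies that $R$ is a genuine immersed submanifold, identifying it with $H^0$.
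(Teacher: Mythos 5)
Your first inclusion (reachable set contained in the identity component of the group generated by exponentials of the Lie subalgebra) matches the paper and is fine. The genuine gap is in your second step, $H^0\subseteq R$. You invoke the commutator approximation $\exp(-\tau Y)\exp(-\tau X)\exp(\tau Y)\exp(\tau X)=\exp(\tau^2[X,Y]+O(\tau^3))$, the chronological calculus, and Sussmann's orbit theorem to conclude that the reachable set is an immersed submanifold whose tangent space at the identity is $\mathfrak{h}$, and hence equals $H^0$. But these are finite-dimensional tools: the orbit theorem and the passage from ``every bracket direction is tangent to $R$'' to ``$R$ is open in $H^0$'' both rest on an inverse-function-theorem argument that is unavailable on the Fr\'echet--Lie group ${\rm Diff}(M)\cong{\rm Aut}(C^\infty(M))$. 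The Milnor example you yourself cite is exactly the statement that $\exp:\mathfrak{X}(M)\rightarrow{\rm Diff}(M)$ is neither locally injective nor locally surjective, so producing tangent directions at the identity does \emph{not} certify that a neighborhood of the identity in $H^0$ is reached. Appealing to regularity in the sense of Kriegl--Michor does not repair this: regularity gives existence of evolution operators for curves in the Lie algebra, not local surjectivity of the exponential or an orbit theorem. So the crucial surjectivity step is not actually established by your argument.

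The paper closes this gap by a purely algebraic device that your proposal is missing. Having shown that all finite products $\exp(\mathcal{L}_{v_1})\cdots\exp(\mathcal{L}_{v_k})$ with $\mathcal{L}_{v_i}$ in the generated subalgebra $\mathfrak{g}$ are reachable, it observes that the set of such products is a \emph{normal} subgroup of the identity component $G_0$ of the corresponding subgroup of ${\rm Aut}(C^\infty(M))$ (normality because conjugation sends $\exp(X)$ to $\exp({\rm Ad}_gX)$ and ${\rm Ad}_gX$ stays in $\mathfrak{g}$), and then invokes Thurston's theorem that this identity component is a \emph{simple} group. A nontrivial normal subgroup of a simple group is the whole group, so the reachable products exhaust $G_0$; combined with the fact that piecewise-constant controls suffice to generate the reachable set, this gives the result with no analytic local-surjectivity argument at all. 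If you want to salvage your route, you would need to replace the orbit-theorem step with this simplicity argument (or some other global algebraic input); as written, the analytic machinery you rely on fails for exactly the reason you flag in your own discussion of the Milnor obstruction.
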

\begin{proof}
Let $\mathfrak{g}$ denotes the Lie subalgebra of $\mathfrak{X}(M)$ generated by $\mathcal{L}_f$ and $\mathcal{L}_{g_i}$, $i=1,\dots,n$, and $G$ be the Lie subgroup of ${\rm Aut}(C^\infty(M))$ with the Lie algebra $\mathfrak{g}$. Then, by applying piecewise constant control inputs, it can be shown that elements in ${\rm Aut}(C^\infty(M))$ of the form $\exp(\mathcal{L}_{v_1})\cdots\exp(\mathcal{L}_{v_k})$ with $\mathcal{L}_{v_i}\in\mathfrak{g}$ for all $i=1,\dots,k$ are in the reachable set of the Koopman system in \eqref{eq:Koopman_control_affine}. Note that all such elements form a normal subgroup of $G_0$, the identity component of $G$, but $G_0$ is a simple group \cite{Thurston74}, i.e., does not contain any normal subgroups in addition to the trivial group and itself. Therefore, these elements constitute the whole $G_0$. Moreover, because every state in the reachable set of a control-affine system can be reached by applying piecewise constant control inputs \cite{Jurdjevic96}, we conclude that $G_0$ is the controllable submanifold of the Koopman system in \eqref{eq:Koopman_control_affine}.
\end{proof}

It is worth mentioning that the proof of Theorem \ref{thm:Koopman_controllability} completely relies on the algebraic structure of the Lie algebra generated by the Lie derivative operators $\mathcal{L}_f$ and $\mathcal{L}_{g_i}$, equivalently, the vector fields $f,g_i\in\mathfrak{X}(M)$, and more specifically, it is independent of whether the system in \eqref{eq:Koopman_control_affine} is a Koopman system or not. Therefore, Theorem  \ref{thm:Koopman_controllability} gives rise to a universal characterization of controllable submanifolds for bilinear systems defined on the Lie group ${\rm Aut}(C^\infty(M))$, which can be viewed as the infinite-dimensional analogue of the LARC for bilinear systems evolving on Lie groups. 
 
To present the result in the most general setting, we temporally relax the compactness condition on the manifold $M$. In this case, the Lie algebra of the Lie group ${\rm Diff}(M)$ becomes $\mathfrak{X}_c(M)$, the space of compactly support smooth vector fields on $M$. Technically, to guarantee that the map $\mathbb{R}\rightarrow{\rm Diff}(M)$ given by $t\mapsto\Phi_t$ is smooth for any flow $\Phi$ of a vector field in $\mathfrak{X}_c(M)$, we equip ${\rm Diff}(M)$ with the final topology generated by smooth curves from $\mathbb{R}$ to $C^\infty(M,M)$, the space of all smooth maps form $M$ to $M$. This topology is generally weaker than the Whitney $C^\infty$-topology, but coincides with it when $M$ is compact \cite{Kriegl97}. Algebraically, the isomorphism between ${\rm Aut}(C^\infty(M))$ and ${\rm Diff}(M)$, as well as their Lie algebras, still holds. Moreover, if $M$ is a manifold with boundary, then we also require vector fields in $\mathfrak{X}_c(M)$ to be vanishing on the boundary.

\begin{corollary}[Infinite-dimensional LARC]
Let $M$ be a smooth manifold, possibly noncompact and with boundary, and 
\begin{align*}
\frac{d}{dt}A(t)=\mathcal{L}_fA(t)+\sum_{i=1}^mu_i(t)\mathcal{L}_{g_i}A(t)
\end{align*}
be a bilinear system defined on the Lie group ${\rm Aut}(C^\infty(M))$, where $f,g_i\in\mathfrak{X}_c(M)$ for all $i=1,\dots,m$. Then, the controllable submanifold of the system is the identity component of the Lie subgroup of ${\rm Aut}(C^\infty(M))$ whose Lie algebra is the Lie subalgebra of $\mathfrak{X}_c(M)$ generated by $\mathcal{L}_f$ and $\mathcal{L}_{g_i}$, $i=1,\dots,m$. 
\end{corollary}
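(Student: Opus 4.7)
The plan is to extend the proof of Theorem \ref{thm:Koopman_controllability} to the noncompact (and possibly bounded) setting, using the fact that every ingredient of that proof has a natural counterpart in the compactly supported framework described immediately above the corollary. The strategy therefore is to first establish the Lie group structure of ${\rm Aut}(C^\infty(M))$ in this weaker setting, and then rerun the reachable-set argument verbatim.

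First, I would reprove an analogue of Lemma \ref{lem:Lie_algebra} in the noncompact case. The Gelfand--Naimark isomorphism between ${\rm Aut}(C^\infty(M))$ and ${\rm Diff}(M)$ survives without the compactness hypothesis, since $C^\infty(M)$ is still a commutative topological $\ast$-algebra (now under the full family of seminorms $\|\cdot\|_{i,j}$). The key adjustment is that the tangent space to ${\rm Diff}(M)$ at ${\rm id}_M$, computed via the Riemannian exponential, consists precisely of compactly supported smooth vector fields when we equip ${\rm Diff}(M)$ with the final topology generated by smooth curves $\mathbb{R} \to C^\infty(M,M)$: a smooth one-parameter family of diffeomorphisms moves only a compact set at each bounded time interval, and if $M$ has boundary, the flow must preserve $\partial M$, forcing the infinitesimal generators to vanish there. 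Thus the Lie algebra of ${\rm Aut}(C^\infty(M))$ is identified with Lie derivatives along vector fields in $\mathfrak{X}_c(M)$, and completeness of such vector fields guarantees that their exponentials $\exp(\mathcal{L}_v)$ are globally well-defined elements of ${\rm Aut}(C^\infty(M))$.

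Next, I would repeat the reachable-set construction from the proof of Theorem \ref{thm:Koopman_controllability}. Applying piecewise constant control inputs $u_i(t)$ on successive subintervals, the solution of the bilinear system is a finite product of exponentials $\exp(t_k \mathcal{L}_{v_k}) \cdots \exp(t_1 \mathcal{L}_{v_1})$ with each $v_j$ lying in the real span of $\{f, g_1, \dots, g_m\}$. Taking Lie brackets in the usual way (through the Baker--Campbell--Hausdorff expansion combined with the Chow-type bracket-generating construction using commutators of short-time flows), one obtains every $\exp(\mathcal{L}_v)$ with $\mathcal{L}_v$ in the Lie subalgebra $\mathfrak{g} \subseteq \mathfrak{X}_c(M)$ generated by $\mathcal{L}_f$ and $\mathcal{L}_{g_i}$. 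Let $G$ be the Lie subgroup of ${\rm Aut}(C^\infty(M))$ with Lie algebra $\mathfrak{g}$, and let $G_0$ be its identity component; the collection of finite products of such exponentials forms a normal subgroup $N$ of $G_0$. Thurston's theorem, in its original formulation for compactly supported diffeomorphisms of a smooth manifold (possibly noncompact, possibly with boundary, provided vector fields vanish there), asserts that the identity component of the corresponding diffeomorphism group is simple, so $N$ is either trivial or all of $G_0$. Since $N$ contains nontrivial flows, $N = G_0$.

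Finally, to close the argument I would invoke the standard fact that the reachable set of a control-affine system is the closure of trajectories generated by piecewise constant inputs, so the reachable set of the bilinear Koopman-type system equals $G_0$ exactly, and $G_0$ is the controllable submanifold. The main obstacle I anticipate is the step that relies on Thurston's simplicity result: one must verify that the version of the theorem applicable here covers both the noncompact case and the boundary case under the convention that $\mathfrak{X}_c(M)$ vector fields vanish on $\partial M$. This is exactly why the corollary's hypotheses were tailored, and it is the place where the infinite-dimensional generalization of the finite-dimensional LARC departs from its classical counterpart, since in the finite-dimensional setting one argues directly via the orbit theorem rather than through simplicity of a diffeomorphism group.
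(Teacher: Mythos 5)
Your proposal follows essentially the same route as the paper: the paper's proof of this corollary is literally a pointer back to Theorem \ref{thm:Koopman_controllability}, whose argument (piecewise constant controls yielding products of exponentials, these forming a normal subgroup of the identity component $G_0$, Thurston's simplicity theorem forcing that subgroup to be all of $G_0$, and reachability via piecewise constant inputs) you reproduce faithfully, with the identification of the Lie algebra as $\mathfrak{X}_c(M)$ handled just as the paper does in the discussion preceding the corollary. Your added care about the applicability of Thurston's result in the noncompact and boundary cases is a reasonable refinement but not a departure from the paper's approach.
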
 
\begin{proof}
The proof is identical to that of Theorem \ref{thm:Koopman_controllability}.
\end{proof}

\section{Koopman Feedback Linearization}
\label{sec:Koopman_feedback_linearization}
We have launched a detailed investigation into Koopman systems and their relationship with the dynamical systems that they are associated with. In particular, for control-affine systems, the bilinear form of the associated Koopman systems greatly benefits the study of the original systems in a coordinate-free way, e.g., the characterize of controllability in terms of the de Rham differential as shown in Theorem \ref{thm:Koopman_controllability_d}. In general, it is no doubt that linear systems are preferable from various aspects. Therefore, the focus of this section will be on the search of appropriate coordinates by using Koopman systems under which the corresponding control-affine systems can be represented as linear systems. In addition, we will also indicate the relation and distinction between the proposed linearization and the well-known feedback linearization. As the prerequisite, we will first explore how to change the coordinates of control-affine systems by using the associated Koopman systems. 

\subsection{Koopman change of coordinates}
\label{sec:Koopman_coordinates}

 Owing to the action of Koopman operators on the space of observables, for the purpose of changing the coordinate representation of a system, it is natural that the new coordinates must compose of the observables of the system. In addition, in the case of the state-space manifold $M$ of the system having dimension $n>1$, every coordinate chart of $M$ must contain $n$ components to guarantee $M$ to be locally homeomorphic to $\mathbb{R}^n$. Therefore, the use of Koopman operators to change the coordinates of the system requires the extension of the action of Koopman operators from $C^\infty(M)$ to $C^\infty(M,\mathbb{R}^n)$, the space of $\mathbb{R}^n$-valued smooth functions defined on $M$, where $\mathbb{R}^n$ is treated as a smooth manifold under its usual topology and smooth functions from $M$ to $\mathbb{R}^n$ are understood in the usual sense as smooth functions between two smooth manifolds. We also refer to such functions in $C^\infty(M,\mathbb{R}^n)$ as observables and their values in $\mathbb{R}^n$ as outputs. 
 
Note that $\mathbb{R}^n$, as a manifold, is parallelizable since the set of coordinate vector fields $\{\frac{\partial}{\partial x_1},\dots,\frac{\partial}{\partial x_1}\}$ restricts to a basis of $T_p\mathbb{R}^n$ at any $p\in\mathbb{R}^n$ \cite{Lee12}. As a direct consequence of this result, every elements $h\in C^\infty(M,\mathbb{R}^n)$ admits a representation as $n$-tuple $h=(h_1,\dots,h_n)'$ with each component $h_i\in C^\infty(M)$ and `$\prime$' denoting the transpose.  Equivalently, we reveal an isomorphism between $C^\infty(M,\mathbb{R}^n)$ and $\big(C^\infty(M)\big)^n$ as $C^\infty(M)$-algebras, where $\big(C^\infty(M)\big)^n$ denotes the direct sum of $n$ copies of $C^\infty(M)$, which in turn indicates the component-wise action of the Koopman operator $U_t$ on $C^\infty(M,\mathbb{R}^n)$ as 
$$U_th=U_t\left[\begin{array}{c} h_1 \\ \vdots \\ h_r \end{array}\right]=\left[\begin{array}{c} U_th_1 \\ \vdots \\ U_th_n \end{array}\right]\in C^\infty(M,\mathbb{R}^n)$$
so that $U_t\in{\rm Aut}(C^\infty(M,\mathbb{R}^n))$. In particular, in the case that the system is in the control-affine form and controllable on $M$, the component-wise action can then be extended from the Koopman operator $U_t$ to its infinitesimal generator $\frac{d}{dt}U_t$ as
$$\frac{d}{dt}U_th=\left[\begin{array}{c} \frac{d}{dt}U_th_1 \\ \vdots \\ \frac{d}{dt}U_th_r \end{array}\right]=\left[\begin{array}{c} d(U_th_1) \\ \vdots \\ d(U_th_n) \end{array}\right],$$
To elucidate the mechanism of $\frac{d}{dt}U_th$, we calculate its action on vector fields on $M$ by using local coordinates. Specifically, let $(x_1,\dots,x_n)$ and $(y_1,\dots,y_n)$ are coordinate charts on $M$ and $\mathbb{R}^n$, respectively, then $\frac{d}{dt}U_th$ has the local representation
\begin{align*}
\frac{d}{dt}U_th=\sum_{i=1}^nd(U_th_i)\otimes\frac{\partial}{\partial y_i}=\sum_{i=1}^n\sum_{j=1}^n\frac{\partial U_th_i}{\partial x_j}dx_j\otimes\frac{\partial}{\partial y_i},
\end{align*}
which exactly gives the Jacobian matrix, equivalently the coordinate representation of the differential, of the function $U_th$, where $\otimes$ denotes the tensor product of vector bundles, to be more specific, the tensor product between the cotangent bundle $T^*M$ of $M$ and the tangent bundle $T\mathbb{R}^n$ of $\mathbb{R}^n$. Let $f=\sum_{i=1}^nf_i\frac{\partial}{\partial x_i}$ be a smooth vector field on $M$, then the action of $\frac{d}{dt}U_t$ on $f$ is given by
\begin{align}
\frac{d}{dt}U_th(f)&=\sum_{i=1}^n\Big(\sum_{j=1}^n\frac{\partial U_th_i}{\partial x_j}dx_j\Big)\Big(f_k\frac{\partial}{\partial x_k}\Big)\otimes\frac{\partial}{\partial y_i}\nonumber\\
&=\sum_{i=1}^n\Big(\sum_{j=1}^nf_j\frac{\partial U_th_i}{\partial x_j}\Big)\frac{\partial}{\partial y_i},\label{eq:Koopman_pushforward}
\end{align}
where we use the duality between $T^*M$ and $TM$ as $dx_j(\frac{\partial}{\partial x_k})=\delta_{jk}$ with $\delta_{jk}=1$ if $j=k$ and $\delta_{jk}=0$ otherwise for all $j,k=1,\dots, n$. Note that the equation in \eqref{eq:Koopman_pushforward} coincides with $dU_th(f)$, the differential of $U_th$ acting on $f$ \cite{Lee12}, which then gives rise to the following conclusion.

\begin{proposition}
\label{prop:Koopman_pushforward}
Given a controllable control-affine system evolving on a smooth manifold $M$. Then, for any observable $h\in C^\infty(M,\mathbb{R}^n)$ of the system, the associated Koopman system acting on $h$ gives the pushforward of smooth vector fields on $M$ by the flow of the system output. 
\end{proposition}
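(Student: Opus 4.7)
The plan is to reduce the vector-valued statement to the scalar action already characterized in Theorem~\ref{thm:Koopman_controllability_d} and then to recognize the resulting object as the classical differential of the smooth map $U_t h : M \to \mathbb{R}^n$. The controllability hypothesis plays a single but essential role here: it lets us identify the infinitesimal Koopman action on $C^\infty(M)$ with the de Rham differential $d$, after which the extension to $\mathbb{R}^n$-valued observables is essentially bookkeeping.

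First I would use the isomorphism $C^\infty(M,\mathbb{R}^n) \cong (C^\infty(M))^n$ recalled just above the statement to write $h = (h_1,\dots,h_n)^\prime$ with each $h_i \in C^\infty(M)$, so that the componentwise Koopman action yields $\frac{d}{dt}U_t h = (\frac{d}{dt}U_t h_1, \dots, \frac{d}{dt}U_t h_n)^\prime$. Applying Theorem~\ref{thm:Koopman_controllability_d} componentwise replaces each entry with $d(U_t h_i)$. Next, using the global frame $\{\partial/\partial y_i\}$ on the parallelizable manifold $\mathbb{R}^n$, I would assemble these $n$ one-forms into the single tensor $\sum_{i=1}^n d(U_t h_i) \otimes \partial/\partial y_i$ appearing in the coordinate calculation leading to~\eqref{eq:Koopman_pushforward} and identify it with the coordinate-free differential $d(U_t h) : TM \to T\mathbb{R}^n$ of the smooth map $U_t h = h \circ \Phi_t$. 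Applying this differential to any $f \in \mathfrak{X}(M)$ is, by definition, the pushforward $(U_t h)_\ast f$; since $y(t) = U_t h(x) = h(\Phi_t(x))$ is exactly the output produced by the system flow, this is the pushforward by the flow of the system output as claimed.

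The main obstacle I anticipate is not computational but conceptual: verifying that the componentwise construction genuinely glues into a bundle map rather than merely a tuple of one-forms, and that no choice of coordinates on $M$ contaminates the result. This is handled by invoking naturality of the de Rham differential under smooth maps together with the canonical nature of the global parallelization of $\mathbb{R}^n$, so that the assembled tensor transforms correctly under coordinate changes on $M$ and agrees with $d(U_t h)$ as an intrinsic object. Once this identification is in place, the equality $\frac{d}{dt}U_t h(f) = d(U_t h)(f) = (U_t h)_\ast f$ carries the local computation of~\eqref{eq:Koopman_pushforward} over to its coordinate-free conclusion, completing the proof.
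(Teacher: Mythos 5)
Your proposal is correct and follows essentially the same route as the paper: both reduce to the componentwise identification $\tfrac{d}{dt}U_t h_i = d(U_t h_i)$ via Theorem~\ref{thm:Koopman_controllability_d}, assemble the resulting one-forms into the differential of $U_t h = h\circ\Phi_t$ using the coordinate computation of~\eqref{eq:Koopman_pushforward}, and read off the pushforward by the output flow. The only difference is that the paper's proof additionally treats the case of a proper controllable submanifold $N\subset M$ via Corollary~\ref{cor:Koopman_controllability_d} and the pullback $\iota^*d$, which goes beyond the controllability hypothesis in the statement and is therefore not a gap in your argument.
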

\begin{proof}
This is largely an exercise in decoding terminologies. We first assume that the system is controllable on $M$. As shown in \eqref{eq:Koopman_pushforward}, for any $f\in\mathfrak{X}(M)$, $\frac{d}{dt}U_th(f)$ defines a section $d(U_th)(f)$ of the pullback bundle $(U_th)^*T\mathbb{R}^n$ over $M$, that is, the pushforward of $f$ by $U_th\in C^\infty(M,\mathbb{R}^n)$, denoted by $(U_th)_*f$ \cite{Jost17}. Furthermore, let $\Phi_t$ denote the time-$t$ flow of the system, then $U_th=h\circ\Phi_t$ exactly gives the follow of the output generated by the observable $h$. In another words, $(U_th)_*f=(h\circ\Phi_t)_*f$ is the pushforward of $f$ by the flow of the system output as desired. 

In the case that the controllable submanifold of the system is $N\subset M$, by Corollary \ref{cor:Koopman_controllability_d}, we have $\frac{d}{dt}U_th(f)=\iota^*d(U_th)(f)=d(\iota^*U_th)(f)$, where $\iota:N\hookrightarrow M$ is the inclusion of $N$ into $M$ and we use the commutativity between the de Rham differential and pullback maps. However, because the system is controllable only on $N$, the flow $\Phi_t$ of the system completely lies in $N$ so that the system output is given by $U_th=h\circ\Phi_t=h\circ\Phi_t\circ\iota=\iota^*(U_th)$. Therefore, the result follows the same proof as in the controllable case above.
\end{proof}

Specifically, for pursuing the goal of changing coordinates, it is of particular interest to apply the pushforward map $\frac{d}{dt}U_th=(U_th)_*$ to the vector field governing the dynamics of the system $\frac{d}{dt}x(t)=f(x(t))+\sum_{i=1}^mu(t)g(x(t))$. This yields
\begin{align}
&\frac{d}{dt}U_th\Big(f+\sum_{i=1}^mu_ig_i\Big)=(U_th)_*\Big(f+\sum_{i=1}^mu_ig_i\Big)\nonumber\\
&=(h\circ\Phi_t)_*\Big(f+\sum_{i=1}^mu_ig_i\Big)=h_*\Big[(\Phi_t)_*\Big(f+\sum_{i=1}^mu_ig_i\Big)\Big]\nonumber\\
&=h_*\Big[\Big(f+\sum_{i=1}^mu_ig_i\Big)\circ\Phi_t\Big]=h_*\Big(f+\sum_{i=1}^mu_ig_i\Big)\circ\Phi_t\label{eq:change_coordinates}
\end{align}
where $(\Phi_t)_*\big(f+\sum_{i=1}^mu_ig_i\big)=\big(f+\sum_{i=1}^mu_ig_i\big)\circ\Phi_t$ follows from the fact that the vector field $f+\sum_{i=1}^mu_ig_i$ is invariant under its own flow $\Phi_t$.

\begin{theorem}
\label{thm:change_coordinates}
Given any control-affine system, the action of the associated Koopman system on an observable that restricts to a local diffeomorphism on the controllable submanifold of the system defines a coordinate transformation for the system. 
\end{theorem}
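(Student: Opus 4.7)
The plan is to combine the Koopman pushforward identity established in Proposition \ref{prop:Koopman_pushforward} and the explicit computation in \eqref{eq:change_coordinates} with the inverse function theorem applied to the hypothesis that $h$ restricts to a local diffeomorphism on the controllable submanifold $N \subseteq M$. Since the whole point is to reinterpret the output $y(t) = U_t h(x_0) = h(\Phi_t(x_0))$ as a state trajectory in a new coordinate system, I first need to certify that $h$ really does yield coordinates on $N$, and then show that the Koopman system produces an autonomous evolution equation for $y(t)$ in those coordinates.

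First, I would invoke the inverse function theorem on $h|_N$: for each $p \in N$ the hypothesis gives an open neighborhood $V \subseteq N$ of $p$ such that $h|_V \colon V \to h(V) \subseteq \mathbb{R}^n$ is a diffeomorphism. Thus $(V, h|_V)$ is a local chart, and the collection obtained by letting $p$ vary over $N$ covers $N$ by charts whose transition maps $h|_{V_2} \circ (h|_{V_1})^{-1}$ are smooth on overlaps, so $h$ provides a bona fide (local) coordinate representation on $N$. Because the controllable submanifold $N$ is invariant under the flow $\Phi_t$ of the system (this is precisely what was used to derive Corollary \ref{cor:Koopman_controllability_d}), trajectories starting in $N$ stay in $N$, so it is legitimate to restrict the entire discussion to $N$ and use these $h$-coordinates along trajectories.

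Second, I would appeal directly to \eqref{eq:change_coordinates}: evaluating at an initial condition $x_0 \in N$ and setting $y(t) = h(\Phi_t(x_0))$, one obtains
$$\frac{d}{dt}y(t) = h_*\Big(f + \sum_{i=1}^m u_i(t) g_i\Big)(x(t)).$$
Locally inverting $h$ on a chart $V$ containing $x(t)$ expresses $x(t) = (h|_V)^{-1}(y(t))$, so the right-hand side becomes a function of $y(t)$ alone (parametrized by the controls), yielding a self-contained differential equation in the new coordinates. Consistency across overlapping charts $V_1 \cap V_2$ is automatic from the naturality of the pushforward: both local representations descend from pushing forward the same ambient vector field $f + \sum_i u_i g_i$ along $h$, and therefore agree up to the smooth transition map.

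The main conceptual obstacle is not a calculation but keeping the category straight. One must recognize that because $h$ is only a local diffeomorphism, the "coordinate transformation" is inherently local on $N$, not a global chart on $M$; and one must take care that the pushforward $(U_th)_* = h_* \circ (\Phi_t)_*$ factoring in \eqref{eq:change_coordinates} uses the flow-invariance of the system vector field along its own flow, which is exactly what makes the $y$-side evolution match the Koopman-induced evolution of the observable. Once these points are made, essentially no new machinery is required beyond Proposition \ref{prop:Koopman_pushforward} and the inverse function theorem.
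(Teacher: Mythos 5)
Your proposal is correct and follows essentially the same route as the paper's own proof: both use the local-diffeomorphism hypothesis to produce charts $U\to h(U)$ on the controllable submanifold $N$, the tangency of the drift and control vector fields to $N$, and the pushforward identity in \eqref{eq:change_coordinates} to write the dynamics as $\frac{d}{dt}y(t)=h_*f(y(t))+\sum_{i=1}^m u_i(t)h_*g_i(y(t))$ in the coordinates $y=h(x)$. Your added remarks on chart-overlap consistency and the flow-invariance of the system vector field under its own flow are harmless elaborations of points the paper treats implicitly, not a different argument.
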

\begin{proof}
We adopt the notations above that $M$ and $N$ denote the state-space manifold and controllable submanifold of the system, respectively, and $h\in C^\infty(M,\mathbb{R}^n)$ is the observable of the system satisfying the required assumption. Then, for any state $x(t)\in N$, there is a neighborhood $U$ in $N$ containing $x(t)$ diffeomorphic to $h(U)\subseteq\mathbb{R}^n$. On the other hand, because $N$ is the controllable submanifold of the system, the restrictions of the drift and control vector fields $f$ and $g_i$ to $N$ are tangent to $N$, i.e., are not only vector fields along $N$ but also vector fields on $N$. In the following, we identify $h$, $f$ and $g_i$ with their restrictions on $N$, where the system evloves. The diffeomorphism between $U$ and $h(U)$ then implies that $h_*f$ and $h_*g$, induced by the action of the associated Koopman system on $h$ as shown in \eqref{eq:change_coordinates}, are well-defined vector fields on $h(U)$, not only sections of the pullback bundle $h^*Th(U)$. This immediately enables the representation of the system in the coordinates of $h(U)$. Specifically, under the coordinate $y=h(x)$ on $h(U)$ induced by the observable $h\in C^\infty(M,\mathbb{R}^n)$, the flow of the system is exactly $h\circ\Phi_t=U_th$, and hence the derivation in \eqref{eq:change_coordinates} leads to the representation
\begin{align}
\label{eq:Koopman_coordinate}
\frac{d}{dt}y(t)=h_*f(y(t))+\sum_{i=1}^mu_i(t)h_*g_i(y(t))
\end{align}
of the system the system $\frac{d}{dt}x(t)=f(x(t))+\sum_{i=1}^mu(t)g(x(t))$ in this coordinate on $h(U)$, concluding the proof.
\end{proof}

It is worth to emphasize again that in the system in \eqref{eq:Koopman_coordinate}, $h$, $f$ and $g_i$ should be interpreted as their restrictions on the controllable submanifold where the original system $\frac{d}{dt}x(t)=f(x(t))+\sum_{i=1}^mu_i(t)g_i(x(t))$ evolves on. More importantly, this further illustrates a distinctive feature as well as a great advantage of the Koopman change of coordinates. Conventionally, an observable $h\in C^\infty(M,\mathbb{R}^n)$ is required to be a diffeomorphism to generate a change of coordinates for the system on the state-space manifold $M$. However, by leveraging the associated Koopman system, the change of coordinates for the system can be localized to a controllable submanifold $N$ with the requirement for $h$ relaxed to be a local diffeomorphism when restricted to $N$. This localization is exactly enabled by the composition with the system flow $\Phi_t$ in the Koopman operator, for $\Phi_t$ is defined on the space where the system evolves as revealed in the proof of Proposition \ref{prop:Koopman_pushforward}.

\subsection{Koopman feedback linearization}

It is no doubt that linear systems are generally preferable, as an application of the Koopman change of coordinates, this section focuses on the case that a control-affine system as in \eqref{eq:control_affine} can be transformed to a linear system in the coordinates induced by the associated Koopman system. In particular, recalling the transformed system in \eqref{eq:Koopman_coordinate}, if it is a linear system, then $h_*f$ is a linear vector field and $h_*g_i$, $i=1,\dots,m$ are all constant vector fields. This imposes extremely strong conditions on the original system, e.g., the pushforward of the Lie bracket $h_*[g_i,g_j]$ between any pair of control vector fields $g_i$ and $g_j$ has to vanish identically because the pushforward operation is natural in the sense of $h_*[g_i,g_j]=[h_*g_i,h_*g_i]=0$ \cite{Lee12}, where $h_*g_i$ and $h_*g_j$ are constant vector fields and hence have trivial Lie bracket. For the purpose of looking for less restrictive conditions, we first note that, in the transformed system in \eqref{eq:control_affine}, the new coordinates are actually constructed from the output function $h$ in the form of $y(t)=h(x(t))$, and this inspires the adoption of the feedback linearization technique. The idea of this technique is to apply feedback control laws in the form of $u_i(t)=\alpha_i(x)+\sum_{j=1}^m\beta_{ij}(x)v_j(t)$ for some $\alpha_i,\beta_{ij}\in C^\infty(M)$ and all $i,j=1,\dots,m$ so that the forced system 
\begin{align}
\frac{d}{dt}x(t)=\big[f(x(t))+\sum_{i=1}^m&\alpha_i(x(t))g_i(x(t))\big]\nonumber\\
&+\sum_{i=1}^m\sum_{j=1}^m\beta_{ij}v_j(t)g_i(x(t)) \label{eq:feedback}
\end{align} 
can be linearized by using a coordinate system determined by the observable $h$. However, it has been shown that feedback linearizability immediately implies controllability, i.e., controllability is a necessary condition to guarantee the system to be feedback linearizable \cite{Isidori95}. In the sequel, we will integrate the feedback linearization technique into the Koopman framework, and as a major advantage, the controllability condition can be dropped due to the fact that the change of coordinates generated by the associated Koopman system can be localized to the controllable submanifold of the system as discussed at the end of Section \ref{sec:Koopman_coordinates}.

Specifically, let $N$ be the controllable submanifold of the system and $\iota:N\hookrightarrow M$ the inclusion, then we assume that there is an observable $h=(h_1,\dots,h_l)\in C^\infty(M,\mathbb{R}^l)$ satisfying the following conditions in a neighborhood $U\subseteq N$ of a point $p\in N$:
\begin{itemize}
\item for each $i=1,\dots,l$ and $j=1,\dots,m$, there is some integer $r_i$ such that $\L_{g_i}\L^k_f(h_j\circ\iota)=0$ for all $k=1,\dots,r_i-2$.
\item the matrix of functions in $C^\infty(M)$
\begin{align*}
R=\left[\begin{array}{ccc} \L_{g_1}\L^{r_1-1}_f(h_1\circ\iota) & \cdots & \L_{g_m}\L^{r_1-1}_f(h_1\circ\iota) \\ \L_{g_1}\L^{r_2-1}_f(h_2\circ\iota) & \cdots & \L_{g_m}\L^{r_2-1}_f(h_2\circ\iota) \\ \vdots &  & \vdots \\ \L_{g_1}\L^{r_l-1}_f(h_l\circ\iota) & \cdots & \L_{g_m}\L^{r_l-1}_f(h_l\circ\iota)  \end{array}\right]
\end{align*}
has rank $l$. 
\end{itemize}
Then, we say that the system has \emph{relative degree} $(r_1,\dots,r_l)$ at $p$. 

\begin{proposition}
\label{prop:Koopman_independence}
Suppose the observable $h\in C^\infty(M,\mathbb{R}^l)$ of the control-affine system in \eqref{eq:control_affine} has the relative degree $(r_1,\dots,r_l)$ at $p\in N$, then the matrix of 1-forms on $M$
\begin{align}
\left[\begin{array}{cccc} \frac{d}{dt}U_th_1 & \frac{d}{dt}U_t\L_fh_1 & \cdots & \frac{d}{dt}U_t\L_f^rh_i \\ \frac{d}{dt}U_th_2 & \frac{d}{dt}U_t\L_fh_2 & \cdots & \frac{d}{dt}U_t\L_f^rh_2 \\ \vdots & \vdots & & \vdots \\\frac{d}{dt}U_th_l & \frac{d}{dt}U_t\L_fh_l & \cdots & \frac{d}{dt}U_t\L_f^rh_l \end{array}\right] \label{eq:Koopman_independence}
\end{align}
has rank $l$ evaluated at every points in a neighborhood $U$ of $p$ in $N$, where $U_t$ denote the associated Koopman operator and $r=\max\{r_1,\dots,r_l\}$. 
\end{proposition}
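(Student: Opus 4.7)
The plan is to reduce the claimed rank condition, which a priori involves $t$-dependent 1-forms through $U_t$, to a purely pointwise linear-algebra statement about the relative-degree matrix $R$. The first step is to invoke Corollary \ref{cor:Koopman_controllability_d}, which gives $\frac{d}{dt}U_t = \iota^{*}d$ as operators on $C^\infty(M)$. Since the controllable submanifold $N$ is an integral manifold of the distribution generated by $f$ and $g_1,\dots,g_m$, each of these vector fields is tangent to $N$, so $\iota^{*}\L_f^k = \L_f^k\iota^{*}$ and $\iota^{*}\L_{g_i} = \L_{g_i}\iota^{*}$. The $(j,k)$ entry of the matrix in \eqref{eq:Koopman_independence} therefore collapses to $d\big(\L_f^k(h_j\circ\iota)\big)$, a 1-form on $N$ that no longer carries any $t$- or input-dependence.

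Next, I would establish rank $l$ at $p$ by showing that the $l$ row vectors $\omega_j(x)=\big(d\L_f^k(h_j\circ\iota)|_x\big)_{k=0,\dots,r}\in (T^{*}_x N)^{r+1}$ are linearly independent. Suppose $\sum_{j=1}^{l} c_j\,\omega_j|_p = 0$ for some scalars $c_j$. Contracting each component-entry with $g_i|_p$, which is well-defined because $g_i$ is tangent to $N$, produces the scalar identities
\begin{align}
\label{eq:plan-scalar}
\sum_{j=1}^{l} c_j\,\L_{g_i}\L_f^{k}(h_j\circ\iota)(p) = 0,\qquad k=0,\dots,r,\quad i=1,\dots,m.
\end{align}

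The key step is an elimination along the distinct values $\rho_1<\rho_2<\cdots<\rho_s$ appearing in $\{r_1,\dots,r_l\}$. At $k=\rho_1-1$ the relative-degree vanishing condition annihilates every term in \eqref{eq:plan-scalar} indexed by a $j'$ with $r_{j'}>\rho_1$, because then $\rho_1-1\le r_{j'}-2$; what remains is $\sum_{j':\,r_{j'}=\rho_1}c_{j'}\,R_{j'i}(p)=0$ for all $i$, and the full row rank of $R$ forces $c_{j'}=0$ whenever $r_{j'}=\rho_1$. Substituting these zeros back and iterating at $k=\rho_2-1,\dots,\rho_s-1$ in turn knocks out all remaining $c_j$'s, so $\omega_1|_p,\dots,\omega_l|_p$ are linearly independent and the matrix in \eqref{eq:Koopman_independence} has rank $l$ at $p$. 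Because maximal rank is an open condition, $R$ has rank $l$ throughout an open neighborhood $U\subseteq N$ of $p$, and the same argument runs verbatim at every point of $U$.

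The main obstacle I anticipate is not conceptual but combinatorial bookkeeping: verifying that the relative-degree inequality cleanly kills exactly the right off-block terms in the elimination, and handling the case of coinciding relative degrees by grouping rows with a common $r_j$ value before invoking the rank of the corresponding submatrix of $R$. Tangency of $f$ and the $g_i$'s to $N$, needed throughout to commute Lie derivatives with $\iota^{*}$, is guaranteed by the characterization of $N$ as a maximal integral manifold of the distribution they span, as recorded in the discussion preceding Corollary \ref{cor:Koopman_controllability_d}.
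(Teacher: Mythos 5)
Your argument is correct and proves the stated claim, but it takes a genuinely different route from the paper's. Both proofs share the identical opening move: invoking Corollary \ref{cor:Koopman_controllability_d} to rewrite each entry as $\iota^*d\L_f^k h_j = d\L_f^k(h_j\circ\iota)$, using tangency of $f$ and the $g_i$ to $N$ to commute $\L_f$ past $\iota^*$. From there you diverge. The paper pairs the differentials against the iterated brackets $\L_f^{\b-1}g_j = \mathrm{ad}_f^{\b-1}g_j$ and expands via the identity $d(\L_f^{\a-1}h_i)(\mathrm{ad}_f^{\b-1}g_j)=\sum_{k=0}^{\b-1}(-1)^k\binom{\b-1}{k}\L_f^{\b-1-k}\L_{g_j}\L_f^{\a-1+k}h_i$, exhibiting an anti-triangular block structure whose antidiagonal reduces to the matrix $R$; after row rearrangement this gives linear independence of \emph{all} $r_1+\cdots+r_l$ one-forms $d\L_f^{\a-1}(h_i\circ\iota)$, $\a=1,\dots,r_i$. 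You instead pair only against $g_1,\dots,g_m$ and run a staged elimination of the coefficients $c_j$ over the distinct relative-degree values $\rho_1<\cdots<\rho_s$, using at each stage that the vanishing condition kills the higher-degree rows and that the surviving sub-collection of rows of $R$ is independent. Your route is more elementary (no bracket-expansion identity, no block bookkeeping) and suffices for the rank-$l$ statement as written; what it does not deliver is the paper's stronger conclusion that all $n=\sum_i r_i$ differentials are independent, which is the fact actually consumed in Theorem \ref{thm:Koopman_feedback_linear} to conclude that $\phi$ is a local diffeomorphism. One minor point: both proofs (yours at the stage $k=\rho_1-1$ when $\rho_1$ could equal the smallest degree, the paper's at $\a=\b=1$) implicitly use the vanishing $\L_{g_j}(h_i\circ\iota)=0$ for $r_i\ge 2$, i.e., the $k=0$ case of the relative-degree condition, which the paper's definition literally omits by starting the range at $k=1$; this is a defect of the definition rather than of either argument.
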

\begin{proof}
According to Corollary \ref{cor:Koopman_controllability_d} that $\frac{d}{dt}U_t$ acts on observables by $\iota^*d$, we have $\frac{d}{dt}U_t\L_f^kh_i=\iota^*d\L_f^kh_i=d\iota^*\L_f^kh_i=d(\L_f^kh_i\circ\iota)$, where we use the commutativity of the pullback and de Rham differential. We now claim $\L_fh\circ\iota=\L_f(h\circ\iota)$. To see this, we use the definition of Lie derivatives $\L_f(h\circ\iota)=d(h\circ\iota)(f)=dh\circ d\iota(f)$. Because $N$ is the controllable submanifold of the system, $f|_N\in\mathfrak{X}(N)$ holds, which implies $d\iota(f)=f|_N$, yielding the claim. By induction on the order $k$ of the successive Lie derivatives, we obtain $\frac{d}{dt}U_t\L_f^kh_i=d\L_f^k(h_i\circ\iota)$. 

We now consider the $l$-by-$m$ block matrix $G$ with the $(i,j)$-block $G_{ij}$ the $r_i$-by-$r$ matrix whose $(\alpha,\beta)$-entry is given by 
\begin{align*}
\frac{d}{dt}U_t\L^{\a-1}&h_i(\L_f^{\b-1}g_j)=d\L_f^{\a-1}(h_i\circ\iota)(\L_f^{\b-1}g_j)\\
&=d(\L_f^{\a-1}h_i)(\L_f^{\b-1}g_j)\circ\iota\\
&=\sum_{k=0}^{\b-1}(-1)^k{\b-1 \choose k}\L_f^{\b-1-k}\L_{g_j}\L_f^{\a-1+k}h_i\circ\iota
\end{align*}
where we use the facts $f|_N,g_i|_N\in\mathfrak{X}(N)$ and the product rule of Lie derivatives. Let $U\subseteq N$ be a neighborhood of $x$ as in the definition of relative degree, then in $U$ the matrix $G_{ij}$ satisfies
\begin{align*}
\frac{d}{dt}U_t\L^{\a-1}&h_i(\L_f^{\b-1}g_j)\\
&=\begin{cases}
0,\quad\text{if }\a+\b<r_i+1\\
(-1)^{\b-1}\L_{g_i}\l_f^{r_i-1}h_i\circ\iota,\quad\text{if }\a+\b=r_i+1
\end{cases}
\end{align*}
Correspondingly, the upper left triangular block of $G_{ij}$ consists of only 0, and the antidiagonal elements are equal to $(-1)^{\b-1}\L_{g_i}\l_f^{r_i-1}h_i$. Therefore, rearranging the rows of $G$ results in a block triangular matrix with each of the diagonal blocks equal to the matrix $R$. Because $A$ is full rank by the definition of the relative degree, the desired result follows. 
\end{proof}

An another interpretation of Proposition \ref{prop:Koopman_independence} is that the matrix in \eqref{eq:Koopman_independence} has full row rank, equivalently, the row vectors are linearly independent in $U\subseteq N$. This particularly implies that $(\frac{d}{dt}U_th_1,\dots,\frac{d}{dt}U_t\L_f^{r_1-1}h_1,\dots,\frac{d}{dt}U_th_l,\dots,\frac{d}{dt}U_t\L_f^{r_l-1}h_l)$ qualifies as a coordinate chart of $N$ on $U$ provided $r_1+\cdots+r_l={\rm dim}\,N$, which is exactly the desired coordinates linearizing the nonlinear control-affine system in \eqref{eq:feedback}, that is, the system in \eqref{eq:control_affine} steered by a feedback control law.

\begin{theorem}[Koopman feedback linearization]
\label{thm:Koopman_feedback_linear}
Given a control-affine system defined on $M$ as in \eqref{eq:control_affine}, and let $N\subseteq M$ be the controllable submanifold and $h=(h_1,\dots,h_l)\in C^\infty(M,\mathbb{R}^l)$ be an observable of the system. If the system has relative degree $(r_1,\dots,r_l)$ at $p\in N$ such that $r_1+\dots+r_l=n$, the dimension of $N$, then the action of the associated Koopman system on the observable $\phi=(h_1,\dots,\L_f^{r_1-1}h_1,\dots,h_l,\dots,\L_f^{r_i-1}h_l)\in C^\infty(M,\mathbb{R}^n)$ gives a linear system defined on and neighborhood $U\subseteq N$ of $p$ in the form of
\begin{align*}
\frac{d}{dt}U_t\phi=AU_t\phi+Bv(t)
\end{align*}
where 
$$A=\left[\begin{array}{ccc} A_1 & & \\ & \ddots & \\ & & A_l \end{array}\right]\in\mathbb{R}^{n\times n},\ B=\left[\begin{array}{ccc} b_1 & & \\ & \ddots & \\ & & b_l \end{array}\right]\in\mathbb{R}^{n\times m}$$
and $v(t)=(v_1(t),\dots,v_l(t))'\in\mathbb{R}^l$ with
$$A_i=\left[\begin{array}{ccccc} 0 & 1 & 0 & \cdots & 0 \\ 0 & 0 & 1 & \cdots & 0 \\ \vdots & \vdots & \vdots & \ddots  & \vdots \\ 0 & 0 & 0 & \cdots & 1 \\ 0 & 0 & 0 & \cdots & 0 \end{array}\right]\in\mathbb{R}^{r_i\times r_i},\ b_i=\left[\begin{array}{c} 0 \\ 0 \\ \vdots \\ 0 \\ 1 \end{array}\right]\in\mathbb{R}^{r_i}$$
and $v_i(t)=\sum_{j=1}^mu_j(t)U_t\L_{g_j}\L_f^{r_i-1}(h_i\circ\iota)+U_t\L_f^{r_i}(h_i\circ\iota)$, respectively, for all $i=1,\dots,l$.
\end{theorem}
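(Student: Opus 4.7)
The plan is to exploit the bilinear form of the Koopman system given by Theorem \ref{thm:Koopman_bilinearization} together with the vanishing pattern imposed by the relative degree condition. First, I would index the components of $\phi$ as a double array $\phi_i^{(k)} = \mathcal{L}_f^{k-1}(h_i\circ\iota)$ for $i=1,\dots,l$ and $k=1,\dots,r_i$, so that the total length is $r_1+\cdots+r_l=n=\dim N$. Proposition \ref{prop:Koopman_independence} already guarantees that the 1-forms $\frac{d}{dt}U_t\phi_i^{(k)}=d\phi_i^{(k)}$ are linearly independent in a neighborhood $U\subseteq N$ of $p$, so $\phi$ restricts to a local diffeomorphism from $U$ onto its image, yielding a valid coordinate chart by Theorem \ref{thm:change_coordinates}.

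Next, I would compute $\frac{d}{dt}U_t\phi_i^{(k)}$ using the bilinear Koopman system in \eqref{eq:Koopman_control_affine}, which reads
\begin{align*}
\frac{d}{dt}U_t\phi_i^{(k)} = U_t\mathcal{L}_f\phi_i^{(k)} + \sum_{j=1}^m u_j(t)\,U_t\mathcal{L}_{g_j}\phi_i^{(k)}.
\end{align*}
For $k=1,\dots,r_i-1$ the relative-degree hypothesis forces $\mathcal{L}_{g_j}\mathcal{L}_f^{k-1}(h_i\circ\iota)=0$ for every $j$, so the entire control-dependent sum collapses and one is left with
\begin{align*}
\frac{d}{dt}U_t\phi_i^{(k)} = U_t\mathcal{L}_f^{k}(h_i\circ\iota) = U_t\phi_i^{(k+1)}.
\end{align*}
This is precisely the shift pattern encoded by the superdiagonal 1's of each block $A_i$ acting on $U_t\phi$. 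For the terminal index $k=r_i$, no vanishing is available, and the computation yields $\frac{d}{dt}U_t\phi_i^{(r_i)} = U_t\mathcal{L}_f^{r_i}(h_i\circ\iota)+\sum_j u_j(t) U_t\mathcal{L}_{g_j}\mathcal{L}_f^{r_i-1}(h_i\circ\iota)$, which is exactly $v_i(t)$ as defined in the theorem. This produces the single nonzero entry at the bottom of the $i$-th block column of $B$.

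Assembling the $l$ chains side by side gives the block-diagonal matrices $A=\mathrm{diag}(A_1,\dots,A_l)$ and $B=\mathrm{diag}(b_1,\dots,b_l)$, and the equation
\begin{align*}
\frac{d}{dt}U_t\phi = AU_t\phi + Bv(t)
\end{align*}
holds in the strong operator topology on $U$, completing the reduction to Brunovsky canonical form. The main subtlety I anticipate is the careful handling of the restriction to the controllable submanifold $N$: because the system is generally only controllable on $N$, every Lie derivative appearing above must be read as acting on $h_i\circ\iota$ rather than $h_i$ itself, and the identity $\mathcal{L}_f(h_i\circ\iota)=(\mathcal{L}_fh_i)\circ\iota$ (established inside the proof of Proposition \ref{prop:Koopman_independence} using $\iota_* f|_N = f|_N$) must be invoked at every step to keep the computation consistent. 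Once this restriction convention is in place, everything else is a routine algebraic expansion, and the dimension count $r_1+\cdots+r_l=n$ is what ensures that $\phi$ saturates the coordinate chart on $N$ so that the resulting linear system genuinely captures the local dynamics rather than a lower-dimensional projection.
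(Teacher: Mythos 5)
Your proposal is correct and follows essentially the same route as the paper: invoke Proposition \ref{prop:Koopman_independence} with $r_1+\cdots+r_l=n$ to obtain the coordinate chart $\phi$ on $U\subseteq N$, expand $\frac{d}{dt}U_t\L_f^{k}(h_i\circ\iota)$ via the (bi)linear Koopman dynamics, and let the relative-degree vanishing kill the control terms except at $k=r_i-1$, which yields the shift blocks $A_i$ and the inputs $v_i(t)$. The only cosmetic difference is that you apply \eqref{eq:Koopman_control_affine} as an operator identity while the paper performs the same chain-rule computation pointwise along a trajectory in $U$ and then uses the arbitrariness of $x(t)\in U\subseteq N$; the restriction convention $\L_f(h_i\circ\iota)=(\L_fh_i)\circ\iota$ you flag is exactly the one the paper establishes inside the proof of Proposition \ref{prop:Koopman_independence}.
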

\begin{proof}
We first recall that the action of the associated Koopman system $\frac{d}{dt}U_t\phi$ gives the differential of $U_t\phi$, then by Proposition \ref{prop:Koopman_independence}, the assumption $r_1+\dots+r_l=n$ implies that $\phi$ restricts to a diffeomorphism on a neighborhood $U\subseteq N$ of $x$ to $\mathbb{R}^n$. This implies that $\phi$ gives rise to a coordinate chart of $N$ on $U$. Next, to compute the coordinate representation of the system in \eqref{eq:control_affine} in this chart, pick any $q\in U$ as the initial condition of the system $x(0)=q$ and $t$ small enough so that the trajectory of the system stays in $U$, then for any $i=1,\dots,l$ and $k=0,\dots,r_i-1$, we have $U_th_i(q)=h(x(t))$ and
\begin{align*}
\frac{d}{dt}U_t\L_f^kh_i(p)&=\frac{d}{dt}\L_f^kh_i(x(t))=d\L_f^kh_i\Big(\frac{d}{dt}x(t)\Big)\\
&=d\L_f^kh_i\big(f+\sum_{j=1}^mu_jg_j\big)(x(t))\\
&=\L_f^{k+1}h_i(x(t))+\sum_{j=1}^mu_j\L_{g_j}\L_f^{k}h_i(x(t)),
\end{align*} 
Because $N$ is the controllable submanifold such that $x(t)\in U\subseteq N$ can be arbitrary, we obtain
\begin{align*}
\frac{d}{dt}U_t\L_f^kh_i\circ\iota&=U_t\L_f^{k+1}h_i\circ\iota+\sum_{j=1}^mu_jU_t\L_{g_j}\L_f^{k}h_i\circ\iota,
\end{align*} 
in which $\L_{g_j}\L_f^{k}h_i\circ\iota=0$ unless $k=r_i-1$ by the relative degree assumption and $\iota:N\hookrightarrow M$ denotes the inclusion. With the choice of $v(t)$ in the theorem statement, the desired result follows. 
\end{proof}

In the classical nonlinear systems theory, the notion of relative degree, and hence feedback linearization, is defined on a neighborhood of a point in the state-space of a system. Moreover, a feedback linearizable system is necessarily controllable, otherwise, the system can at most be partially linearized \cite{Isidori95}. However, the integration of the associated Koopman system localizes the feedback linearization technique to the controllable submanifold of the system by effectively eliminating the uncontrollable states of the system, which in turn greatly expands the scope of feedback linearization. 

\section{Conclusion}

In this paper, we develop a Koopman control framework for analyzing control systems by using Koopman operators. Based on the detailed investigation into the algebraic and analytical properties of Koopman operators, instead of spectral methods and ergodicity assumptions, we rigorously derive the infinite-dimensional differential equation system governing the dynamics of the Koopman operator, referred to as the Koopman system, associated with a control system. Especially, for a control-affine system, the associated Koopman system gives rise to a global bilinearizaton of the system. In addition, we further reveal the Gelfand duality between control-affine and bilinear Koopman systems, which greatly benefits controllability analysis of these systems. In part, it leads to the characterization of controllability for control-affine systems in terms of de Rham differenal operators. On the other hand, by leveraging techniques in infinite-dimensional geometry, we show that bilinear Koopman systems are defined on infinite-dimensional Lie groups and carry out an extension of LARC to such infinite-dimensional systems. The developed framework is then adopted in the context of feedback linearization, with the emphasis on the localization nature of Koopman systems. As a consequence, the scope of the classical feedback linearization technique is extended by including uncontrollable systems.  This work not only greatly expands the repertoire of tools in geometric control theory, but also sheds light on novel finite-dimensional approaches to the study of infinite-control dimensional control systems, through the bridge of the Gelfrand duality between finite-dimensional systems and the associated infinite-dimensional Koopman systems.

\bibliographystyle{ieeetr}
\bibliography{Koopman}

\end{document}